\theoremstyle{plain}
\newtheorem{theorem}{Theorem}[section]
\newtheorem{lemma}[theorem]{Lemma}
\newtheorem{cor}[theorem]{Corollary}
\newtheorem{prop}[theorem]{Proposition}
\newtheorem{obs}[theorem]{Observation}
\newtheorem{rem}[theorem]{Remark}
\theoremstyle{definition}
\newtheorem{dfn}[theorem]{Definition}
\def\finf{\mathop{{\rm I}\kern -.27 em {\rm F}}\nolimits}
\begin{document}

\title{Metric Dimension and Zero Forcing Number\\ of Two Families of Line Graphs}

\author{{\bf Linda Eroh$^1$}, {\bf Cong X. Kang$^2$}, and {\bf Eunjeong Yi$^3$}\\
\small$^1$ University of Wisconsin Oshkosh, Oshkosh, WI 54901, USA\\
\small $^{2,3}$Texas A\&M University at Galveston, Galveston, TX 77553, USA\\
$^1${\small\em eroh@uwosh.edu}; $^2${\small\em kangc@tamug.edu}; $^3${\small\em yie@tamug.edu}}

\maketitle

\date{}

\begin{abstract}
\noindent Zero forcing number has recently become an interesting graph parameter studied in its own right since its introduction by the ``AIM Minimum Rank -- Special Graphs Work Group", whereas metric dimension is a well-known graph parameter. We investigate the metric dimension and the zero forcing number of some line graphs by first determining the metric dimension and the zero forcing number of the line graphs of wheel graphs and the bouquet of circles. We prove that $Z(G) \le 2Z(L(G))$ for a simple and connected graph $G$. Further, we show that $Z(G) \le Z(L(G))$ when $G$ is a tree or when $G$ contains a Hamiltonian path and has a certain number of edges. We compare the metric dimension with the zero forcing number of a line graph by demonstrating a couple of inequalities between the two parameters. We end by stating some open problems.
\end{abstract}

\noindent\small {\bf{Key Words:}} resolving set, metric dimension, zero forcing set, zero forcing number, line graph, wheel graph, bouquet of circles\\
\small {\bf{2010 Mathematics Subject Classification:}} 05C12, 05C50, 05C38, 05C05\\

\section{Introduction}

Let $G = (V(G),E(G))$ be a finite, simple, undirected, and connected graph of order $|V(G)| \ge 2$ and size $|E(G)|$. For a given graph $G$ and $S \subseteq V(G)$, we denote by $\langle S \rangle$ the subgraph induced by $S$. For a vertex $v \in V(G)$, the \emph{open neighborhood of $v$} is the set $N(v)=\{u \mid uv \in E(G)\}$, and the \emph{degree} of a vertex $v \in V(G)$ is $\deg_G(v)=|N(v)|$; an \emph{end-vertex} (also called \emph{pendant}) is a vertex of degree one. We denote by $\Delta(G)$ the maximum degree, and by $\delta(G)$ the minimum degree of a graph $G$. The \emph{distance} between two vertices $u, v \in V(G)$, denoted by $d_G(u, v)$, is the length of the shortest path in $G$ between $u$ and $v$; we omit $G$ when ambiguity is not a concern. The diameter, $diam(G)$, of a graph G is given by $\max \{d(u, v) \mid  u, v \in V (G) \}$. The \emph{line graph} $L(G)$ of a simple graph $G$ is the graph whose vertices are in one-to-one correspondence with the edges of $G$; two vertices of $L(G)$ are adjacent if and only if the corresponding edges of $G$ are adjacent. Whitney \cite{Whitney} showed that $K_3$ and $K_{1,3}$ are the only two connected non-isomorphic graphs having the same line graph. \\

A vertex $x \in V(G)$ \emph{resolves} a pair of vertices $u,v \in V(G)$ if $d(u,x) \neq d(v,x)$. A set of vertices $W \subseteq V(G)$ \emph{resolves} $G$ if every pair of distinct vertices of $G$ is resolved by some vertex in $W$; then $W$ is called a \emph{resolving set} of $G$. For an ordered set $W=\{w_1, w_2, \ldots, w_k\} \subseteq V(G)$ of distinct vertices, the \emph{metric code} (or \emph{code}, for short) of $v \in V(G)$ with respect to $W$, denoted by $code_W(v)$, is the $k$-vector $(d(v, w_1), d(v, w_2), \ldots, d(v, w_k))$. The \emph{metric dimension} of $G$, denoted by $\dim(G)$, is the minimum cardinality over all resolving sets of $G$. Slater \cite{Slater2, Slater} introduced the concept of a resolving set for a connected graph under the term \emph{locating set}. He referred to a minimum resolving set as a \emph{reference set}, and the cardinality of a minimum resolving set as the \emph{location number} of a graph. Independently, Harary and Melter in~\cite{HM} studied these concepts under the term \emph{metric dimension}. Since metric dimension is suggestive of the dimension of a vector space in linear algebra, sometimes a minimum resolving set of $G$ is called a \emph{basis} of $G$. Metric dimension as a  graph parameter has numerous applications; among them are robot navigation \cite{landmarks}, sonar \cite{Slater}, combinatorial optimization \cite{MathZ}, and pharmaceutical chemistry \cite{CEJO}. It is noted in \cite{NPcompleteness} that determining the metric dimension of a graph is an NP-hard problem. Metric dimension has been heavily studied; for surveys, see \cite{base} and \cite{MDsurvey}.\\

The notion of a zero forcing set, as well  as the associated zero forcing number, of a simple graph was introduced in~\cite{AIM} to bound the minimum rank of associated matrices for numerous families of graphs. Let $mr(G)$ be the minimum rank and let $M(G)$ be the maximum nullity of the associated matrices of a graph $G$; then $mr(G)+M(G)=|V(G)|$. Let each vertex of a graph $G$ be given one of two colors, ``black" and ``white" by convention. Let $S$ denote the (initial) set of black vertices of $G$. The \emph{color-change rule} converts the color of a vertex $u_2$ from white to black if the white vertex $u_2$ is the only white neighbor of a black vertex $u_1$; we say that $u_1$ forces $u_2$, which we denote by $u_1 \rightarrow u_2$. And a sequence, $u_1 \rightarrow u_2 \rightarrow \cdots \rightarrow u_{i} \rightarrow u_{i+1} \rightarrow \cdots \rightarrow u_t$, obtained through iterative applications of the color-change rule is called a \emph{forcing chain}. Note that, at each step of the color change, there may be two or more vertices capable of forcing the same vertex. The set $S$ is said to be \emph{a zero forcing set} of $G$ if all vertices of $G$ will be turned black after finitely many applications of the color-change rule. The \emph{zero forcing number} of $G$, denoted by $Z(G)$, is the minimum of $|S|$ over all zero forcing sets $S \subseteq V(G)$. It is shown in \cite{AIM} that $M(G) \le Z(G)$. The zero forcing parameter has been heavily studied; see \cite{ZFsurvey, ZFsurvey2} for surveys. More recently, the zero forcing parameter has become a graph parameter of interest studied in its own right~\cite{iteration, proptime}.\\

Bailey and Cameron initiated a comparative study of metric dimension and base size (along with other invariants) of a graph in~\cite{base}. In~\cite{dimZ}, we initiated  a comparative study between metric dimension and zero forcing number of graphs. The metric dimension and the zero forcing number coincide for paths $P_n$, cycles $C_n$, complete graphs $K_n$, complete bi-partite graphs $K_{s,t}$ ($s+t \ge 3$), for examples; they are $1$, $2$, $n-1$, and $s+t-2$, respectively. For the Cartesian product of two paths, zero forcing number can be seen to be arbitrarily larger than the metric dimension. On the other hand, the bouquet (or amalgamation) of circles shows that the metric dimension may be arbitrarily larger than the zero forcing number (see \cite{iteration} and \cite{bouquetD}). Recently, Feng, Xu and Wang \cite{line} obtained the bounds of the metric dimension of the line graph $L(G)$ of a connected graph $G$ of order at least five, and they proved that $\dim(L(T))=\dim(T)$ for a tree $T$. In this paper, we determine the metric dimension and the zero forcing number of some line graphs. We show that $\dim(L(W_{1,n}))=n-\lceil \frac{n}{3}\rceil$ for a wheel graph $W_{1,n}=C_n+K_1$, where $n \ge 6$, and $\dim(L(B_n))=2n-1$ for a bouquet $B_n$ of $n \ge 2$ circles. We prove that $Z(G) \le 2Z(L(G))$ for a simple and connected graph $G$. Also, we prove that $Z(L(W_{1,n}))=n+1$ for $n \ge 3$, and $Z(L(B_n))=2n-1$ for $n \ge 2$. Further, we show that $Z(G) \le Z(L(G))$ when $G$ is a tree or when $G$ contains a Hamiltonian path and has a certain number of edges. Finally, we compare the metric dimension with the zero forcing number of a line graph by demonstrating a couple of inequalities between the two parameters. We conclude this paper with some open problems.


\section{Metric Dimension of Some Line Graphs}

To put things in perspective, before proceeding onto results specific to our paper, we recall some basic facts on the metric dimension of graphs.\\

\begin{theorem} \cite{CEJO} \label{dimbounds}
For a connected graph $G$ of order $n \ge  2$ and diameter $d$, $$f (n, d) \le \dim(G) \le n-d,$$
where $f (n, d)$ is the least positive integer $k$ for which $k + d^k \ge n$.
\end{theorem}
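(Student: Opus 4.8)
The plan is to establish the two inequalities separately: the upper bound by exhibiting an explicit resolving set built from a diametral path, and the lower bound by a counting (pigeonhole) argument on metric codes.

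For the upper bound $\dim(G) \le n-d$, I would fix a geodesic $x_0, x_1, \ldots, x_d$ realizing the diameter, so that $d(x_i, x_0) = i$ for each $i$, and set $W = V(G) \setminus \{x_1, \ldots, x_d\}$, a set of cardinality $n-d$ containing $x_0$. To see that $W$ resolves $G$, note first that any vertex lying in $W$ trivially resolves itself from every other vertex (its own code entry there is $0$ while the other vertex contributes a positive entry). The only pairs left to handle are pairs of deleted path vertices $x_i, x_j$ with $i \ne j$; these are resolved by $x_0 \in W$, since $d(x_i, x_0) = i \ne j = d(x_j, x_0)$. Hence $W$ is a resolving set and $\dim(G) \le |W| = n-d$.

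For the lower bound $f(n,d) \le \dim(G)$, let $W = \{w_1, \ldots, w_k\}$ be a minimum resolving set, so $k = \dim(G)$, and consider the map sending each vertex $v$ to its code $code_W(v)$. Since $W$ resolves $G$, this map is injective, so the $n$ codes are pairwise distinct. Each coordinate of a code is a distance, hence an element of $\{0, 1, \ldots, d\}$; moreover a coordinate equals $0$ only at the corresponding $w_j$. Restricting attention to the $n-k$ vertices outside $W$, every coordinate of their codes lies in $\{1, 2, \ldots, d\}$, so these codes are distinct elements of a set of size $d^k$. This forces $n-k \le d^k$, that is, $k + d^k \ge n$.

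The main step is then to convert this last inequality into the claimed bound. Since $k + d^k$ is strictly increasing in $k$ for $d \ge 1$, the set of positive integers $k$ satisfying $k + d^k \ge n$ is upward closed; as $\dim(G)$ satisfies it, the minimality in the definition of $f(n,d)$ yields $f(n,d) \le \dim(G)$. I expect the only delicate point to be the counting of admissible codes—specifically the observation that discarding $W$ eliminates the coordinate value $0$ and leaves exactly $d^k$ possible codes—after which the monotonicity argument is routine; the boundary case $d=1$, where both bounds collapse to $n-1$, serves as a useful sanity check.
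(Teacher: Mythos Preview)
Your argument is correct. Note, however, that the paper does not supply its own proof of this theorem: it is quoted from \cite{CEJO} as background and used later (e.g., in Proposition~\ref{dimwheel5}) without reproving it. The proof you give is essentially the standard one from that source --- the upper bound via the complement of a diametral path (keeping one endpoint to separate the omitted path vertices) and the lower bound via the pigeonhole count on codes in $\{1,\ldots,d\}^k$ for vertices outside a basis. One minor remark: you do not actually need the monotonicity of $k+d^k$ to conclude $f(n,d)\le \dim(G)$; once $\dim(G)$ is shown to satisfy $k+d^k\ge n$, the definition of $f(n,d)$ as the least such $k$ already gives the inequality.
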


\begin{theorem} \cite{line} \label{linebounds}
For a connected graph $G$ of order $n \ge 5$,
$$\lceil \log_2 \Delta (G) \rceil \le \dim(L(G)) \le n-2.$$
\end{theorem}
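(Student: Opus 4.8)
The statement bundles a lower and an upper bound, and I would attack them separately, since they call for entirely different ideas.

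For the lower bound $\lceil \log_2 \Delta(G) \rceil \le \dim(L(G))$, the plan is to exploit the clique that a maximum-degree vertex induces in $L(G)$. Let $v \in V(G)$ with $\deg_G(v) = \Delta := \Delta(G)$, and let $Q$ be the set of the $\Delta$ edges of $G$ incident to $v$. Since any two such edges are adjacent in $L(G)$, the set $Q$ induces a clique $K_\Delta$ in $L(G)$. The key observation is that a single landmark cannot distinguish many vertices of $Q$ at once: if $w \in V(L(G))$ is any landmark and $m = \min_{u \in Q} d_{L(G)}(w, u)$, then for every $u \in Q$ the triangle inequality together with $d_{L(G)}(u, u') = 1$ (for distinct $u, u' \in Q$) forces $d_{L(G)}(w, u) \in \{m, m+1\}$. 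Hence each landmark assigns the vertices of $Q$ one of only two distance values, so a resolving set of size $k$ induces at most $2^k$ distinct codes when restricted to $Q$. To resolve the $\Delta$ vertices of $Q$ we need $2^k \ge \Delta$, which gives $k \ge \lceil \log_2 \Delta \rceil$ and hence the claimed lower bound.

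For the upper bound $\dim(L(G)) \le n-2$, I would exhibit an explicit resolving set of $L(G)$ of the prescribed size. The basic tool is the distance formula in a line graph: for distinct edges $e = ab$ and $w = xy$ of $G$,
\[
d_{L(G)}(e,w) = 1 + \min\{\, d_G(s,t) : s \in \{a,b\},\ t \in \{x,y\}\,\},
\]
so a landmark edge $w$ ``sees'' an edge $e$ through the distance in $G$ from $w$ to the nearer endpoint of $e$. I would fix a spanning tree of $G$ and order $V(G)$ as $v_1, \ldots, v_n$ so that each $v_i$ ($i \ge 2$) is adjacent to an earlier vertex; for $i \ge 3$ I select one edge joining $v_i$ to $\{v_1, \ldots, v_{i-1}\}$, producing a candidate landmark set $W$ of $n-2$ edges. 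Using the distance formula, the aim is to show that any two edges of $G$ receive different codes with respect to $W$: edges that are ``far apart'' are separated by the nearest landmark, whereas edges sharing an endpoint (which form the cliques identified in the lower bound) are separated by landmarks incident to the later vertices in the ordering.

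The main obstacle is precisely this last verification for the upper bound. The delicate points are (i) showing that two edges can be safely omitted from the landmark set, i.e., that the two ``extremal'' edges in the ordering are resolved for free by their distances to the remaining landmarks, and (ii) handling the configurations that make the lower bound tight, namely large cliques in $L(G)$ arising from high-degree vertices of $G$, where many edges carry nearly identical distance profiles. I expect the argument to require a case analysis according to whether a pair of edges shares zero endpoints, one endpoint, or a common neighborhood that makes them hard to separate, and to invoke the connectivity of $G$ together with the hypothesis $n \ge 5$ in order to rule out the small degenerate graphs (such as short cycles or small complete bipartite graphs) where the construction would otherwise need to be adjusted.
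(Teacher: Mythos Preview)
The paper does not prove this theorem; it is quoted from \cite{line} (Feng, Xu and Wang) and used later as a black box in the proof of Proposition~\ref{comp2}. So there is no in-paper argument to compare against.

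Your treatment of the lower bound is correct and complete: the $\Delta(G)$ edges incident to a maximum-degree vertex form a clique in $L(G)$, and for any landmark $w$ the triangle inequality confines all distances from $w$ into that clique to two consecutive integers; hence $k$ landmarks yield at most $2^k$ codes on the clique, forcing $k\ge\lceil\log_2\Delta(G)\rceil$. This is exactly the standard argument.

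The upper bound, however, is not a proof but a plan with the hard part left open. You propose a candidate set $W$ of $n-2$ tree edges and then say the ``main obstacle'' is the verification, that you ``expect'' a case analysis, and that the hypothesis $n\ge 5$ will be ``invoked'' somewhere. None of this is carried out. In particular, the claim that edges sharing an endpoint are ``separated by landmarks incident to the later vertices in the ordering'' needs justification: if $v$ has high degree and several of the edges at $v$ are \emph{not} among your chosen tree edges, you must still distinguish them, and your sketch gives no mechanism for this. The actual proof in \cite{line} does use a construction of this flavor, but the resolving property is established there, not asserted. As written, your upper-bound argument has a genuine gap.
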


A generalization of Theorem \ref{dimbounds} has been given in \cite{new} by Hernando et al.

\begin{theorem} \cite{new}
Let $G$ be a graph of order $n$, diameter $d \ge 2$, and metric dimension $k$. Then
$$n \le \left(\left\lfloor \frac{2d}{3}\right\rfloor+1\right)^k+k\sum_{i=1}^{\lceil \frac{d}{3} \rceil} (2i-1)^{k-1}.$$
\end{theorem}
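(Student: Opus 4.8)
The plan is to work with a metric basis $W=\{w_1,\dots,w_k\}$ of $G$, so that $\dim(G)=k$. Since $W$ resolves $G$, the assignment $v\mapsto code_W(v)$ is injective, and hence $n=|V(G)|$ equals the number of distinct codes. Each code lies in $\{0,1,\dots,d\}^k$, because every coordinate is a distance bounded by the diameter $d$. The trivial box bound $n\le(d+1)^k$ is far too weak, so I would refine the count by sorting the vertices according to their distance to a nearest landmark. For $v\in V(G)$ write $s(v)=\min_{1\le i\le k} d(v,w_i)$, and split $V(G)$ into the ``far'' vertices with $s(v)\ge\lceil d/3\rceil$ and the ``near'' vertices with $s(v)<\lceil d/3\rceil$.

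For the far vertices, every coordinate of $code_W(v)$ satisfies $\lceil d/3\rceil\le s(v)\le d(v,w_i)\le d$, so all such codes lie in the box $\{\lceil d/3\rceil,\dots,d\}^k$. Using the arithmetic identity $d-\lceil d/3\rceil+1=\lfloor 2d/3\rfloor+1$, this box has exactly $(\lfloor 2d/3\rfloor+1)^k$ points; as the codes are distinct, the far vertices number at most $(\lfloor 2d/3\rfloor+1)^k$, which is the first term of the bound.

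For the near vertices I would group them by a nearest landmark: assign each such $v$ to the class $A_{j,s}$ where $s=s(v)$ and $w_j$ is (say, the least-indexed) landmark attaining $d(v,w_j)=s$; these classes are disjoint. The key step is the observation that any two vertices $u,v\in A_{j,s}$ are mutually close, namely $d(u,v)\le d(u,w_j)+d(w_j,v)=2s$, so by the triangle inequality every coordinate obeys $|d(u,w_\ell)-d(v,w_\ell)|\le 2s$. Thus, across $A_{j,s}$, each of the $k-1$ coordinates indexed by $\ell\ne j$ takes at most $2s+1$ values, while the $j$-th coordinate is frozen at $s$; since distinct codes force distinct $(k-1)$-tuples, $|A_{j,s}|\le(2s+1)^{k-1}$. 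Summing over the at most $k$ choices of $j$ and over $s=0,1,\dots,\lceil d/3\rceil-1$, and reindexing by $i=s+1$, bounds the near vertices by $k\sum_{i=1}^{\lceil d/3\rceil}(2i-1)^{k-1}$, the second term. Adding the two estimates gives the claimed inequality.

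I expect the main obstacle to be the class bound $|A_{j,s}|\le(2s+1)^{k-1}$: one must notice that sharing a nearby landmark forces mutual closeness of width $2s$, which in turn confines every remaining coordinate to a window of only $2s+1$ integers. The remaining care is bookkeeping --- choosing the partition so the near classes are genuinely disjoint to avoid double counting, and verifying $d-\lceil d/3\rceil+1=\lfloor 2d/3\rfloor+1$ so that the far-vertex box count matches the stated first term exactly.
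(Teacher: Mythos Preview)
The paper does not supply its own proof of this theorem; it is quoted verbatim from Hernando, Mora, Pelayo, Seara and Wood (reference \cite{new}) as background. Your argument is correct and is essentially the proof given in that source: split vertices according to whether their minimum distance to the basis is at least $\lceil d/3\rceil$ or not, bound the far vertices by the box $\{\lceil d/3\rceil,\dots,d\}^k$ of size $(\lfloor 2d/3\rfloor+1)^k$, and for each near class $A_{j,s}$ use the triangle inequality through $w_j$ to confine the remaining $k-1$ coordinates to intervals of length $2s+1$, giving $|A_{j,s}|\le (2s+1)^{k-1}$ and hence the second summand after reindexing $i=s+1$.
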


The following definitions are stated in \cite{CEJO}. Fix a graph $G$. A vertex of degree at least three is called a \emph{major vertex}. An end-vertex $u$ is called \emph{a terminal vertex of a major vertex} $v$ if $d(u, v)<d(u, w)$ for every other major vertex $w$. The \emph{terminal degree} of a major vertex $v$ is the number of terminal vertices of $v$. A major vertex $v$ is an \emph{exterior major vertex} if it has positive terminal degree. Let $\sigma(G)$ denote the number of end-vertices of $G$, and let $ex(G)$ denote the number of exterior major vertices of $G$.

\begin{theorem}\cite{CEJO, landmarks, PoZh} \label{tree}
If $T$ is a tree that is not a path, then $\dim(T)=\sigma(T)-ex(T)$.
\end{theorem}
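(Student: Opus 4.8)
The plan is to prove matching lower and upper bounds on $\dim(T)$, both organized around the exterior major vertices. The first step is a structural observation: since $T$ is not a path, walking inward from any end-vertex $u$ along its unique incident path, one must eventually reach a vertex of degree at least three, and the \emph{first} such vertex $v$ is the unique exterior major vertex of which $u$ is a terminal vertex. Consequently every end-vertex is a terminal vertex of exactly one exterior major vertex. Writing $t(v)$ for the terminal degree of $v$ and letting $M$ denote the set of exterior major vertices, this bijection between end-vertices and (terminal vertex, its exterior major vertex) pairs gives $\sigma(T)=\sum_{v\in M} t(v)$, and hence $\sigma(T)-ex(T)=\sum_{v\in M}(t(v)-1)$. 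For each $v\in M$, call the $t(v)$ paths joining $v$ to its terminal vertices the \emph{legs} of $v$; since the interior vertices of a leg have degree two, the legs of a single $v$ meet only at $v$, and legs of distinct exterior major vertices are internally vertex-disjoint.

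For the lower bound I would show that any resolving set $W$ must meet all but at most one leg of each $v\in M$. Suppose instead that two legs of some $v$, say toward terminal vertices $u_i$ and $u_j$, both avoid $W$, and let $\ell\ge 1$ be the length of the shorter of the two. For $1\le s\le \ell$ let $a_s$ and $b_s$ be the vertices at distance $s$ from $v$ on the two legs. For any vertex $w$ lying off both legs, the unique $w$--$a_s$ and $w$--$b_s$ paths each pass through $v$, so $d(w,a_s)=d(w,v)+s=d(w,b_s)$; as $W$ avoids both legs, $code_W(a_1)=code_W(b_1)$ while $a_1\neq b_1$, contradicting that $W$ resolves $T$. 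Thus at least $t(v)-1$ legs of $v$ contain a vertex of $W$, and because the legs of distinct exterior major vertices are disjoint, summing yields $|W|\ge\sum_{v\in M}(t(v)-1)=\sigma(T)-ex(T)$.

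For the upper bound I would exhibit a resolving set of exactly this size: for each $v\in M$ with terminal vertices $u_1,\dots,u_{t(v)}$, place $u_1,\dots,u_{t(v)-1}$ into $W$, omitting a single terminal vertex per exterior major vertex, so that $|W|=\sum_{v\in M}(t(v)-1)=\sigma(T)-ex(T)$. To verify that $W$ resolves $T$, suppose for contradiction that distinct $x,y$ satisfy $code_W(x)=code_W(y)$. Using the standard tree fact that for any landmark $w$ the sign of $d(w,x)-d(w,y)$ is determined by the projection of $w$ onto the $x$--$y$ path, the assumption forces every element of $W$ to project onto the exact midpoint $m$ of a necessarily even-length $x$--$y$ path; in particular no landmark lies strictly on the $x$-side or the $y$-side of $m$. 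I would then derive a contradiction by exhibiting, in the component of $T-m$ containing $x$ (or $y$), an end-vertex that is forced to be a landmark, using that $T$ is not a path together with the fact that only one terminal vertex per exterior major vertex was omitted.

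I expect this last verification to be the main obstacle. Making the midpoint argument airtight requires a careful case analysis on the positions of $x$ and $y$ relative to the exterior major vertices and their legs—in particular, ruling out the possibility that every end-vertex on a given side of $m$ happens to be an omitted terminal—and this bookkeeping, rather than any single clever idea, is where the real work lies. By contrast, the counting identity $\sigma(T)-ex(T)=\sum_{v\in M}(t(v)-1)$ and the lower bound are comparatively routine.
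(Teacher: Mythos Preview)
This theorem is not proved in the present paper; it is quoted from \cite{CEJO,landmarks,PoZh} and used as background. So there is no ``paper's own proof'' to compare against here.

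That said, your outline is exactly the standard argument from those references. The structural bijection between end-vertices and terminal vertices of exterior major vertices, the identity $\sigma(T)-ex(T)=\sum_{v\in M}(t(v)-1)$, and your lower bound (any resolving set must meet all but one leg at each exterior major vertex, else $a_1$ and $b_1$ have equal codes) are all correct and complete as written. The upper-bound construction --- drop one terminal vertex per exterior major vertex --- is also the standard one.

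Where your proposal is genuinely incomplete, and you flag this honestly, is the verification that this $W$ resolves $T$. Your midpoint reduction is valid: in a tree, $d(w,x)=d(w,y)$ forces $w$ to project onto the midpoint $m$ of the $x$--$y$ path, so the components of $T-m$ containing $x$ and $y$ are landmark-free. The remaining step is to show that at least one of those two components must contain a vertex of $W$. A clean way to finish: the component $C_x$ containing $x$ has at least one leaf $u$ of $T$; let $v$ be its exterior major vertex. If $u\in W$ we are done, so assume $u$ is the one omitted terminal of $v$. Then $v$ has another terminal vertex $u'\in W$, and the entire leg from $v$ to $u'$ lies in the same component of $T-m$ as $v$ does. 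Now either $v\in C_x$ (so $u'\in C_x\cap W$, contradiction), or $v\notin C_x$, which forces $m$ to lie on the $u$--$v$ path; but then $m$ is an interior vertex of a leg and has degree~$2$, so $T-m$ has exactly two components and $C_y$ is the one containing $v$, hence $u'\in C_y\cap W$ --- again a contradiction. This fills the gap you identified without a lengthy case analysis.
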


\begin{theorem} \cite{line} \label{lineT}
For a tree $T$ that is not a path, $\dim(L(T))=\sigma(T)-ex(T)$.
\end{theorem}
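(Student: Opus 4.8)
\emph{The plan is to prove the two inequalities $\dim(L(T)) \ge \sigma(T) - ex(T)$ and $\dim(L(T)) \le \sigma(T) - ex(T)$ separately, the whole argument resting on a single distance formula for line graphs of trees.} First I would record that tool: for edges $e$ and $f$ of a tree $T$, the unique $e$--$f$ path in $T$ translates into a unique shortest path in $L(T)$, so that $d_{L(T)}(e,f)$ equals the number of edges on that $T$-path minus one. In particular, if $e$ lies on a terminal leg (pendant path), then the distance from $e$ to any edge outside that leg splits additively as (distance along the leg) plus (distance from the leg's root edge). This additivity is what drives both bounds.

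For the lower bound I fix an exterior major vertex $v$ with terminal legs $P_1, \dots, P_k$ (where $k = ter(v) \ge 2$) running to leaves $\ell_1, \dots, \ell_k$, and name the edges of each leg $e^i_1, \dots, e^i_{m_i}$ from $v$ outward, so that in $L(T)$ each leg becomes a path and the root edges $e^i_1$ are pairwise adjacent (they all meet $v$). Using the additivity above, I would show that for two distinct legs $P_i, P_{i'}$ and any edge $f$ lying outside $P_i \cup P_{i'}$ one has $d_{L(T)}(e^i_j, f) = d_{L(T)}(e^{i'}_j, f)$ for every admissible $j$; hence no edge outside these two legs resolves the pair $(e^i_j, e^{i'}_j)$, and already $j=1$ furnishes an unresolved pair when both legs are $W$-free. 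Consequently any resolving set must meet all but at most one of $P_1, \dots, P_k$, contributing at least $ter(v)-1$ distinct vertices. Since terminal legs of distinct exterior major vertices are edge-disjoint and $\sum_v ter(v) = \sigma(T)$, summing over the $ex(T)$ exterior major vertices gives $\dim(L(T)) \ge \sigma(T) - ex(T)$.

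For the upper bound I would exhibit a resolving set of the right size: for each exterior major vertex choose all but one of its terminal legs and place into $W$ the pendant edge (the edge incident to the terminal leaf) of each chosen leg, yielding exactly $\sum_v (ter(v)-1) = \sigma(T) - ex(T)$ vertices of $L(T)$. The task is then to show that no two edges $e \ne f$ share a code with respect to $W$. By the distance formula, the coordinate $d_{L(T)}(e,h)$ for a chosen leaf edge $h$ records how far $e$ sits from the corresponding leg, so I would locate $e$ relative to the branch structure: the distances to the selected leaf edges first identify the exterior major vertex (or internal region) nearest to $e$, and then the distance along the relevant leg pins down $e$ exactly, the only possible ambiguity being between two legs of one exterior major vertex, which is broken because leaf edges were retained on all but one leg.

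The main obstacle I anticipate is this upper-bound verification. The lower bound is essentially forced by the symmetry of equal-length sibling legs, but showing that the pendant leaf edges \emph{alone} separate every pair of edges requires a careful case analysis: edges on retained legs, edges on the single omitted leg of each major vertex, edges on non-terminal internal paths, and edges incident to a major vertex must all be treated, and one must check that an edge on an omitted leg is still distinguished from its ``mirror'' on a retained leg by the additive distance contribution. Organizing these cases so that the additivity formula does the separating, rather than grinding through coordinates, will be the crux.
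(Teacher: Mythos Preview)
The paper does not prove this statement; it is quoted from \cite{line} (Feng, Xu and Wang) without proof, so there is no in-paper argument to compare your proposal against.

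That said, your outline is the natural adaptation of the classical tree argument (Theorem~\ref{tree}) to $L(T)$, and the key ingredients are sound: the distance formula $d_{L(T)}(e,f)=$ (length of the $e$--$f$ path in $T$) $-1$ is correct, and from it the additivity along terminal legs follows. Your lower-bound argument is fine: the edges $e^i_1$ and $e^{i'}_1$ at the root of two $W$-free legs of the same exterior major vertex are indeed equidistant from every edge outside those legs, forcing at least $ter(v)-1$ vertices of $W$ into the legs of each exterior major $v$. For the upper bound your candidate set (one pendant edge per chosen leg) is the right one, and your self-diagnosis is accurate: the work lies in the case analysis showing that the pendant-edge codes separate an edge on the omitted leg of some $v$ from (i) edges on retained legs of $v$ at the same depth and (ii) edges on internal paths at the same distance from $v$. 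Both are handled by observing that such an edge is strictly closer to the pendant edge on at least one retained leg of $v$ than its putative twin is; making this precise is routine but needs to be written out.
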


\begin{theorem} \cite{CEJO} \label{dimthm}
Let $G$ be a connected graph of order $n \ge 2$. Then
\begin{itemize}
\item[(a)] $\dim(G)=1$ if and only if $G=P_n$,
\item[(b)] $\dim(G)=n-1$ if and only if $G=K_n$,
\item[(c)] for $n \ge 4$, $\dim(G)=n-2$ if and only if $G=K_{s,t}$ ($s,t \ge 1$), $G=K_s + \overline{K}_t$ ($s \ge 1, t \ge 2$), or $G=K_s + (K_1 \cup K_t)$ ($s, t \ge 1$); here, $A+B$ denotes the join of two graphs $A$ and $B$,
and $\overline{C}$ denotes the complement of a graph $C$.
\end{itemize}
\end{theorem}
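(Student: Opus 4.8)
The plan is to derive parts (a) and (b) almost immediately from the distance bound of Theorem~\ref{dimbounds}, and to isolate the real work in the forward direction of (c). Two elementary facts will drive everything. First, call two distinct vertices $u,v$ \emph{twins} if $d(u,w)=d(v,w)$ for every $w\in V(G)\setminus\{u,v\}$; then no vertex other than $u$ or $v$ can resolve the pair $\{u,v\}$, so every resolving set contains at least one of $u,v$, and more generally omits at most one vertex from each class of the twin equivalence relation. Second, directly from the definitions, for $U\subseteq V(G)$ the set $V(G)\setminus U$ resolves $G$ if and only if every pair of vertices lying inside $U$ is resolved by some vertex outside $U$ (pairs meeting $V(G)\setminus U$ are resolved trivially). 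Thus $\dim(G)\le n-k$ exactly when some $k$-set $U$ has all of its internal pairs separated from outside.

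For (a), if $\dim(G)=1$ the lower bound $f(n,d)\le\dim(G)=1$ forces $1+d\ge n$, hence $d=n-1$; the only connected graph on $n$ vertices of diameter $n-1$ is $P_n$, and conversely an end-vertex of $P_n$ has distinct distances to all vertices, so $\dim(P_n)=1$. For (b), feeding $\dim(G)=n-1$ into the upper bound $\dim(G)\le n-d$ gives $d\le 1$, so $G=K_n$; conversely in $K_n$ all distances equal $1$, so any two omitted vertices receive identical codes and $\dim(K_n)=n-1$.

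For (c) I would first pin the diameter: $n-2=\dim(G)\le n-d$ gives $d\le 2$, and $G\ne K_n$ by (b) forces $d\ge 2$, so $d=2$. Hence all distances are $1$ or $2$, and a vertex $w\ne u,v$ resolves $\{u,v\}$ precisely when $w$ is adjacent to exactly one of $u,v$. The backward direction is then a verification: each listed graph has diameter $2$, so $\dim\le n-2$, and no $(n-3)$-set resolves. In $K_{s,t}$ and $K_s+\overline{K}_t$ this is immediate from the twin bound, since each has two twin classes and a resolving set omits at most one vertex per class. In $K_s+(K_1\cup K_t)$ the three twin classes (the universal clique $K_s$, the clique $K_t$, and the lone apex) would naively allow omitting three vertices, but the only vertex adjacent to exactly one of a chosen $K_s$-vertex and a chosen $K_t$-vertex is the apex; omitting the apex as well leaves that pair unresolved, so in fact no $(n-3)$-set works and $\dim=n-2$.

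The forward direction of (c) is the main obstacle, and I would attack it through the twin equivalence classes. Since $d=2$, each class is either a clique or an independent set, and between any two classes the edges are all present or all absent. I would show first that there are at most three classes and then classify the admissible attachment patterns. With two classes, connectivity forces complete attachment, yielding $K_{s,t}$ when both are independent and $K_s+\overline{K}_t$ when one is a clique (two cliques would give $K_n$). With three classes one must use that no $(n-3)$-set resolves: choosing one representative from each class and analysing which of the three internal pairs can be separated from outside, the only surviving configuration is one universal clique joined to a clique and to a single further vertex nonadjacent to that clique, i.e. $K_s+(K_1\cup K_t)$. Finally, four or more classes, or any three-class pattern other than the above, must be shown always to expose a resolving $(n-3)$-set. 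I expect this exhaustive but elementary case analysis to be the hard part, the delicate point being the third family, where the metric dimension sits one above the naive twin lower bound because of the three-way separation conflict identified in the backward direction.
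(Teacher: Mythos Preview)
The paper does not prove this theorem; it is quoted as a known result from \cite{CEJO} and is used only as background (for instance, part of it is invoked in the proof of Proposition~\ref{dimwheel5}). There is therefore no proof in the paper to compare your attempt against.

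For what it is worth, your sketch is along the standard lines of the original argument in \cite{CEJO}: parts (a) and (b) drop out of the diameter bound, and part (c) reduces to a diameter-$2$ classification via twin classes. Your outline of the forward direction of (c) is plausible but, as you yourself note, the case analysis showing that four or more twin classes (or any three-class pattern other than the one listed) always admit a resolving $(n-3)$-set is where the actual content lies, and you have not carried it out. If you want a self-contained proof you will need to complete that exhaustion; otherwise, simply citing \cite{CEJO} as the paper does is appropriate here.
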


\begin{theorem} \cite{base}
For the complete graph $K_n$ for $n \ge 6$, $\dim(L(K_n))=\lceil \frac{2n}{3}\rceil$.
\end{theorem}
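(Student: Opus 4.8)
The plan is to realize $L(K_n)$ concretely as the triangular graph: its vertices are the $\binom{n}{2}$ edges of $K_n$, two such edges being adjacent exactly when they share a vertex. Since any two disjoint edges of $K_n$ (with $n\ge 4$) are joined by a common third edge, $L(K_n)$ has diameter $2$ and $d(e,f)=2-|e\cap f|$ for all edges $e,f$. Hence a set $W$ of edges is a resolving set of $L(K_n)$ if and only if the vector $(|e\cap w|)_{w\in W}$ separates the edges $e$ of $K_n$. The convenient bookkeeping device is to attach to each vertex $x\in V(K_n)$ the set $W_x=\{w\in W: x\in w\}$ of landmarks incident to $x$; then for $e=\{i,j\}$ one has $|e\cap w|=\mathbf{1}[w\in W_i]+\mathbf{1}[w\in W_j]$, so the code of $\{i,j\}$ is determined by $(W_i,W_j)$, and two edges $\{i,j\},\{k,l\}$ are unresolved precisely when $\mathbf{1}[w\in W_i]+\mathbf{1}[w\in W_j]=\mathbf{1}[w\in W_k]+\mathbf{1}[w\in W_l]$ for every $w\in W$. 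I would view $W$ as a spanning subgraph $H$ of $K_n$ and phrase everything through the components and degrees of $H$.

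For the upper bound I would exhibit a resolving subgraph $H$ with $\lceil 2n/3\rceil$ edges built from two gadgets: a path $P_3$ on a triple (two edges) and, to absorb the remainder, a star $K_{1,t}$ with $t\ge 2$ ($t$ edges on $t+1$ vertices). Concretely, take $\lfloor n/3\rfloor$ copies of $P_3$ when $3\mid n$, and otherwise $\lfloor n/3\rfloor-1$ copies of $P_3$ together with one star ($K_{1,3}$ if $n\equiv1$, $K_{1,4}$ if $n\equiv2\pmod 3$); a direct count gives exactly $\lceil 2n/3\rceil$ edges. The verification has two layers. Within a single gadget the vertices receive pairwise distinct, nonzero incidence sets $W_x$, and one checks by inspection that the three (resp.\ $\binom{t+1}{2}$) edges of $K_n$ spanned by its vertices get distinct codes. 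Across gadgets, the code of an edge records which gadgets its endpoints lie in, and, because distinct gadgets use disjoint coordinate blocks and the per-gadget incidence sets are injective, two edges with the same code must meet the same gadget(s) in the same endpoints. This blockwise argument is exactly what makes the pure-$P_3$ packing work when $3\mid n$.

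For the lower bound I would first extract the structural restrictions forced on $H$ by the two types of collisions. Edges sharing a vertex force all the sets $W_x$ to be distinct, which translates into: $H$ has at most one isolated vertex and no $K_2$ (isolated-edge) component. When $H$ does have an isolated vertex $z$, pairing $z$ with a vertex $x$ produces the code of $x$ alone, and this collides with $\{y_1,y_2\}$ exactly when $W_x=W_{y_1}\sqcup W_{y_2}$; unwinding this shows it happens iff $x$ is the center of a $P_3$ component, so an isolated vertex additionally forbids every $P_3$ component. With these restrictions in hand the count is a component tally: every nontrivial component $C$ has $v_C\ge 3$ vertices and at least $v_C-1$ edges, hence edge-to-vertex ratio at least $2/3$; summing over components already gives $|W|\ge\lceil 2n/3\rceil$ when $H$ has no isolated vertex.

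The main obstacle is the remaining case, where $H$ carries the one permitted isolated vertex. Here the naive bound $|W|\ge \lceil (2n-1)/3\rceil$ is short by one exactly when $n\equiv 2\pmod 3$. I would close this gap by combining the sharpened per-component estimate $3e_C\ge 2v_C+1$ (valid once $P_3$'s are excluded, with equality only for four-vertex trees) with the number $t$ of nontrivial components: attaining $|W|=(2n-1)/3$ would force $t=1$, i.e.\ a single nontrivial component on $n-1$ vertices using only $(2n-1)/3$ edges, which violates connectivity ($e_C\ge v_C-1$) as soon as $n\ge 8$. Since $n\equiv 2\pmod 3$ and $n\ge 6$ force $n\ge 8$, this contradiction lifts the bound to $\lceil 2n/3\rceil$ and matches the construction. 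Identifying the correct necessary conditions from the distance-$2$ (disjoint-edge) collisions, and handling this boundary case cleanly, is where the real work lies; the diameter-$2$ reduction and the gadget construction are comparatively routine.
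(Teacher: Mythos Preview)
The paper does not contain a proof of this statement; it is quoted from Bailey and Cameron \cite{base} as a known result among the preliminaries of Section~2, so there is no ``paper's own proof'' to compare against.

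That said, your argument is correct and self-contained. The reduction via $d(e,f)=2-|e\cap f|$ to incidence profiles $W_x$ is the natural one, and your structural analysis is sound: the condition ``all $W_x$ distinct'' is indeed equivalent, for a simple graph $H$, to $H$ having at most one isolated vertex and no $K_2$ component (if $W_x=W_y\neq\emptyset$ then any edge in this common set must be $xy$, and a second edge would force a contradiction). The collision $\{z,x\}\sim\{y_1,y_2\}$ with $z$ isolated does force $W_{y_1}=\{xy_1\}$, $W_{y_2}=\{xy_2\}$, $W_x=\{xy_1,xy_2\}$, i.e.\ a $P_3$ component, exactly as you claim. The counting in the no-isolate case ($e_C\ge v_C-1\ge \tfrac{2}{3}v_C$ for $v_C\ge 3$) and the sharpened estimate $3e_C\ge 2v_C+1$ in the one-isolate case are both correct, and the boundary case $n\equiv 2\pmod 3$ is handled properly: summing gives $3|W|\ge 2(n-1)+t$, so $|W|=(2n-1)/3$ forces $t=1$, and then $e_C=(2n-1)/3\ge v_C-1=n-2$ fails for $n\ge 6$ with $n\equiv 2\pmod 3$ (i.e.\ $n\ge 8$).

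The upper-bound gadget construction also checks out: every vertex receives a nonempty $W_x$, these are pairwise distinct, and the disjoint-block structure guarantees that any two edges not entirely inside the same gadget are separated on some block. One small point worth making explicit in a full write-up is the within-star verification that all $\binom{t+1}{2}$ internal codes are distinct (center--leaf edges carry a coordinate equal to $2$; leaf--leaf edges are $\{0,1\}$-vectors with exactly two $1$'s), but this is routine.
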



\subsection{Wheel Graphs}

Let $W_{1,n}=C_n + K_1$ be the wheel graph on $n+1$ vertices (see (A) of Figure \ref{lemmawheel}). 

\begin{theorem} \cite{wheel2,wheels} \label{wheel}
For $n \ge 3$, let $W_{1,n}=C_n+K_1$ be the wheel graph on $n+1$ vertices. Then
\begin{equation} \nonumber
\dim(W_{1,n}) = \left\{
\begin{array}{cl}
3 & \mbox{if }n=3 \mbox{ or }n=6, \\
\lfloor \frac{2n+2}{5}\rfloor & \mbox{otherwise} .
\end{array} \right.
\end{equation}
\end{theorem}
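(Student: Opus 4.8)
The plan is to exploit the fact that $W_{1,n}$ has diameter $2$ for $n \ge 4$, so that every entry of a metric code is $0$, $1$, or $2$; the case $n=3$ is just $K_4$ and is handled by Theorem~\ref{dimthm}(b). Write $h$ for the hub and $c_0, c_1, \dots, c_{n-1}$ for the rim vertices in cyclic order. Since $d(h, c_i) = 1$ for every $i$, the hub lies at distance $1$ from all rim vertices and hence cannot distinguish any two of them; for a rim vertex $c_i$ we have $d(c_i, c_j) = 1$ exactly when $c_j \in \{c_{i-1}, c_{i+1}\}$ and $d(c_i, c_j) = 2$ otherwise. Thus the code of $c_i$ relative to a set $W$ of rim landmarks is completely determined by $W \cap \{c_{i-1}, c_{i+1}\}$, the set of landmarks adjacent to $c_i$ on the rim. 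The first step is to record this reduction precisely and to argue that the hub may be excluded from an optimal resolving set (a landmark at $h$ never helps separate rim vertices, and for $|W| \ge 3$ the hub is automatically separated from every rim vertex), so that the problem becomes purely combinatorial on $C_n$.

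Next I would translate the resolving condition into constraints on the \emph{gaps} between consecutive rim landmarks. If the landmarks are placed on $C_n$ so that the maximal runs of consecutive non-landmarks have sizes $g_1, g_2, \dots, g_m$ (so $m = |W|$ and $\sum_i g_i = n - m$), then a short analysis of the codes shows: (i) a run of size $g$ contributes $\max(g-2, 0)$ rim vertices whose code is the all-$2$ vector, and since at most one vertex can carry this code, every $g_i \le 3$ with at most one $g_i$ equal to $3$; and (ii) if a landmark has runs of size $\ge 2$ on both sides, then the two rim vertices flanking it share the same singleton code, which is forbidden, so no two cyclically adjacent gaps may both be $\ge 2$. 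Conversely, any placement meeting (i)--(ii) yields pairwise distinct codes (using $n \ne 4$ to rule out the coincidence $\{c_{i-1}, c_{i+1}\} = \{c_{j-1}, c_{j+1}\}$), so (i)--(ii) characterize the resolving sets consisting of rim landmarks.

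With the characterization in hand, both bounds become an optimization over gap sequences. For fixed $m$, the gaps of size $\ge 2$ form an independent set in the cyclic arrangement of the $m$ gaps, so there are at most $\lfloor m/2 \rfloor$ of them; making one of those equal to $3$, the rest equal to $2$, and all remaining gaps equal to $1$ maximizes $\sum_i g_i$ at $m + \lfloor m/2\rfloor + 1$. Hence $W$ resolves $W_{1,n}$ only if $n - m \le m + \lfloor m/2\rfloor + 1$, that is $n \le 2m + \lfloor m/2\rfloor + 1$, and every such $n$ is realized by an explicit landmark placement following the extremal pattern (smaller $n$ being handled by shrinking gaps). Inverting this inequality gives $\dim(W_{1,n}) = \min\{ m : 2m + \lfloor m/2\rfloor + 1 \ge n\}$, and a routine floor computation identifies the right-hand side with $\lfloor \frac{2n+2}{5}\rfloor$.

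I expect the main obstacle to be the boundary bookkeeping rather than the core idea. The extremal optimization is clean, but matching it to the exact expression $\lfloor\frac{2n+2}{5}\rfloor$ requires a careful residue analysis modulo $5$, and the small cases $n \in \{4, 6\}$ must be checked by hand: $n=4$ because the pair-coincidence used in step two fails, and $n=6$ because there the cheapest gap configuration would need only two rim landmarks, but with $m=2$ the hub can no longer be separated automatically (a rim vertex flanked by both landmarks shares the hub's all-$1$ code), forcing $\dim(W_{1,6}) = 3$ and accounting for the exceptional value in the statement. Verifying that including the hub in a resolving set never beats leaving it out --- so that the reduction to rim-only landmarks is legitimate for the lower bound --- is the other place where care is needed.
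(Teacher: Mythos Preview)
The paper does not prove this theorem: it is quoted with citations \cite{wheel2,wheels} and no argument is given. There is therefore nothing in the paper to compare your proposal against.

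That said, your outline is sound and is essentially the standard argument for this result. The reduction to rim-only landmarks is correct (the hub is equidistant from all rim vertices, so it never helps distinguish them; and if the hub is included, the remaining landmarks must already satisfy your constraints (i)--(ii), which costs at least as much). Your gap constraints (i) and (ii) do characterise rim-landmark sets that resolve all rim pairs for $n\ge 5$, and the extremal calculation $n-m\le 1+2\lfloor m/2\rfloor+\lceil m/2\rceil$, i.e.\ $n\le 2m+\lfloor m/2\rfloor+1$, is right; the inversion to $\lfloor(2n+2)/5\rfloor$ checks out residue-by-residue. One small point worth tightening: for the upper bound you must actually \emph{realise} every admissible $n$ at the minimum $m$, which for $n$ strictly below the extremal value $2m+\lfloor m/2\rfloor+1$ means shrinking one of the size-$2$ gaps (or the size-$3$ gap) rather than just asserting it; this is easy but should be said. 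Your handling of the exceptional values is also right in spirit: for $n\ge 7$ the minimising $m$ is automatically at least $3$, so the hub takes care of itself; for $n=6$ the gap optimum would allow $m=2$, but then the unique feasible placement has the two landmarks at rim-distance~$2$ and the vertex between them shares the hub's $(1,1)$ code, forcing $\dim(W_{1,6})=3$; and $n=3$ is $K_4$. You should also explicitly verify $n=5$ (two adjacent rim landmarks work) alongside $n=4$, since both fall into the $m=2$ regime not covered by the ``$m\ge 3$ automatic hub'' argument.
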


\begin{figure}[htbp]
\begin{center}
\scalebox{0.37}{\input{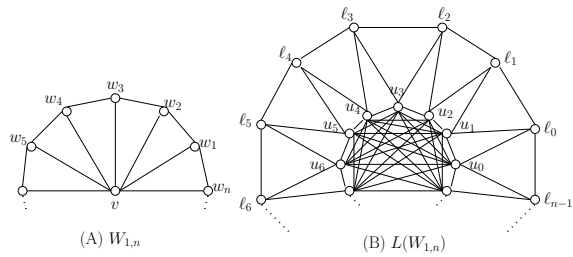}} \caption{\small{The wheel graph and its line graph}}\label{lemmawheel}
\end{center}
\end{figure}

\begin{theorem}\label{propdimlinewheel}
For $n \ge 6$, $\dim(L(W_{1,n})) = n- \lceil \frac{n}{3} \rceil$.
\end{theorem}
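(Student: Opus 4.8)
The plan is to first make the structure of $L(W_{1,n})$ completely explicit and then prove the two inequalities separately. Writing $u_0,\dots,u_{n-1}$ for the vertices of $L(W_{1,n})$ coming from the $n$ spoke-edges of $W_{1,n}$ and $\ell_0,\dots,\ell_{n-1}$ for those coming from the $n$ rim-edges, one checks directly that the $u_i$ form a clique $K_n$, the $\ell_i$ form a cycle $C_n$, and $\ell_i$ is adjacent exactly to $u_i$ and $u_{i+1}$ (indices mod $n$). I would record the three distance formulas I need: $d(u_a,u_b)=1$ for $a\ne b$; $d(u_b,\ell_a)=1$ if $b\in\{a,a+1\}$ and $=2$ otherwise; and $d(\ell_i,\ell_j)=\min\{\,\text{cyclic distance}(i,j),\,3\,\}$, so that in particular $\mathrm{diam}(L(W_{1,n}))=3$ for $n\ge 6$. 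Everything hinges on the fact that a spoke-vertex lies within distance $2$ of every vertex.

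For the lower bound the crucial observation is that the $u_i$ form a clique, so for any two spoke-vertices $u_b,u_{b'}$ outside a resolving set $W$ and any spoke-landmark $u_a\in W$ we have $d(u_b,u_a)=d(u_{b'},u_a)=1$: spoke-landmarks are useless for separating spoke-vertices. Hence the $n-\beta$ spoke-vertices outside $W$ (where $\beta=|W\cap\{u_i\}|$) must be pairwise separated solely by the $\alpha=|W\cap\{\ell_i\}|$ rim-landmarks, and the code of $u_b$ relative to the rim-landmarks is determined entirely by $A\cap\{b-1,b\}$, where $A\subseteq\mathbb{Z}_n$ indexes the rim-landmarks (since $\ell_a$ sits at distance $1$ from $u_a,u_{a+1}$ only). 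I would then count the number $D$ of distinct such codes occurring among all $n$ spoke-vertices; a short case analysis on the maximal runs of $A$ gives $D=[\,\exists\text{ empty code}\,]+\alpha+k_{\ge 2}$, where $k_{\ge 2}$ is the number of maximal runs of $A$ of length at least $2$. Since distinct non-landmark spokes need distinct codes, $n-\beta\le D$, whence $|W|=\alpha+\beta\ge n-[\,\exists\text{ empty}\,]-k_{\ge 2}$. Bounding $k_{\ge 2}\le\lfloor n/3\rfloor$ (runs of length $\ge 2$ must be separated by gaps, so $n\ge 3k_{\ge 2}$) settles every residue class except $n\equiv 0\pmod 3$ with an empty code present; there I would note that $k_{\ge 2}=n/3$ forces all runs of $A$ to have length exactly $2$ and all gaps length $1$, which leaves no two consecutive non-landmarks and hence no empty code, a contradiction, so $k_{\ge 2}\le n/3-1$ and the bound still yields $|W|\ge 2n/3=n-\lceil n/3\rceil$.

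For the upper bound I would exhibit a rim-only resolving set of size $\lfloor 2n/3\rfloor=n-\lceil n/3\rceil$ built from the period-$3$ pattern ``landmark, landmark, gap'' around the cycle of rim-vertices, adjusted near one place when $3\nmid n$ (introducing at most one gap-run of length $2$ to fix parity). For such an $A$ there are no isolated landmarks and at most one pair of consecutive gaps, so the code computation above gives $D=n$, i.e.\ all $n$ spoke-vertices already receive distinct codes and are separated with no spoke-landmarks at all. It then remains to separate rim-rim and spoke-rim pairs: for two non-landmark rims $\ell_i,\ell_j$ (necessarily at cyclic distance $\ge 3$) a neighboring landmark $\ell_{i\pm 1}$ is at distance $1$ from one and $\ge 2$ from the other; and a non-landmark spoke $u_b$ is separated from a non-landmark rim $\ell_j$ because spoke-to-rim distances never exceed $2$ while $\ell_j$ has a landmark at distance $3$ (or, in the few small-$n$ local exceptions, a nearby landmark giving distance $1$ versus $2$). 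I would verify these last separations together with the exact count for each residue of $n\bmod 3$.

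The main obstacle I anticipate is the lower bound, specifically pinning down the formula for $D$ and handling the borderline case $n\equiv 0\pmod 3$, where the naive counting is off by one and one must exploit the rigidity of the extremal run-structure to recover the missing unit. The upper-bound verification is routine but tedious, since the periodic pattern must be patched when $n$ is not divisible by $3$ and the spoke-rim separations near that patch need to be checked by hand.
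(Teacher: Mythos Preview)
Your proposal is correct, and your upper-bound construction coincides with the paper's: a period-$3$ pattern of rim-landmarks, patched once when $3\nmid n$. The paper simply asserts (``one can easily check'') that these sets resolve $L(W_{1,n})$, whereas you sketch a more structural verification via your code count $D=n$; the content is the same.

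Your lower-bound argument, however, takes a genuinely different route. The paper assigns to each spoke-neighbourhood $U_i=\{\ell_{i-1},u_i,\ell_i\}$ a weight $c_i$ (each $\ell$ counting $\tfrac12$, each $u$ counting $1$), so that $|S|=\sum_i c_i$; it then shows that at most one $c_i$ can vanish and that $c_i=\tfrac12$ forces the appropriate neighbour $c_{i\pm1}\ge 1$, and reads off the bound by grouping the $c_i$ into triples of total weight $\ge 2$. You instead exploit the clique on the $u_i$ directly: spoke-landmarks cannot separate spoke-vertices, so the $n-\beta$ non-landmark spokes must receive distinct codes from the rim-landmarks alone, and you count those codes via the run structure of $A$, obtaining $D=\alpha+k_{\ge 2}+[\exists\text{ empty}]$. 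What your approach buys is a transparent explanation of the $\tfrac{2n}{3}$ answer (each long run of $A$ contributes only one extra code, so $k_{\ge 2}\le\lfloor n/3\rfloor$), together with a clean rigidity argument for the delicate $n\equiv 0\pmod 3$ case. The paper's weighted-sum approach is quicker to state but its final triple-grouping step is left rather implicit. Two small caveats on your side: the formula for $D$ tacitly assumes $A\ne\mathbb{Z}_n$ (a single cyclic run has no endpoints, so the count is off by one there---harmless, since then $|W|\ge n$ trivially), and the parenthetical ``necessarily at cyclic distance $\ge 3$'' for non-landmark rim pairs is false once you introduce a gap of length $2$; but your fallback (a neighbouring landmark at distance $1$ versus $2$) still handles that pair, and you already flag these patched cases for hand-verification.
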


\begin{proof}
Let $S$ be a resolving set for $L(W_{1,n})$, where $n \ge 6$.\\

First, we show that $\dim(L(W_{1,n})) \le n- \lceil \frac{n}{3} \rceil$ by constructing a resolving set for $L(W_{1, n})$ of cardinality $n- \lceil \frac{n}{3} \rceil$. See (B) of Figure \ref{lemmawheel} for the labeling of $L(W_{1,n})$. We consider three cases.

\vspace{.1in}

\emph{Case 1: $n=3k$, where $k \ge 2$.} One can easily check that $S=\{\ell_i \mid i \equiv 1 \mbox{ or } 2 \pmod 3\}$ forms a resolving set for $L(W_{1, 3k})$ with $|S|=2k$.

\vspace{.1in}

\emph{Case 2: $n=3k+1$, where $k \ge 2$.} One can easily check that $S=\{\ell_i \mid i \equiv 1 \mbox{ or } 2 \pmod 3\}$ forms a resolving set for $L(W_{1, 3k+1})$ with $|S|=2k$.

\vspace{.1in}

\emph{Case 3: $n=3k+2$, where $k \ge 2$.} One can easily check that $S=\{\ell_i \mid i \equiv 1 \mbox{ or } 2 \pmod 3 \mbox{ and } 0 \le i \le 3k-1\} \cup \{\ell_{3k}\}$ forms a resolving set for $L(W_{1, 3k+2})$ with $|S|=2k+1$.

\vspace{.1in}

Thus, $\dim(L(W_{1,n})) \le n- \lceil \frac{n}{3} \rceil$ for $n \ge 6$.\\

Next, we will show that $\dim(L(W_{1,n})) \ge n- \lceil \frac{n}{3} \rceil$. For $0 \leq i \leq n-1$, define $U_i = \{\ell_{i-1},u_i,\ell_{i}\}$, where the subscript is taken modulo $n$. For each $i$, define $c_i$ as follows. We will use $c_i$ to count the vertices in $S \cap U_i$, where $\ell_i$ and $\ell_{i-1}$ are each counted as $\frac{1}{2}$, because each of these vertices might possibly appear in two different sets, and $u_i$ is $1$. Thus, if $S \cap U_i = \emptyset$, then $c_i=0$. If $S \cap U_i = \{\ell_{i-1}\}$ or $\{\ell_{i}\}$, then $c_i = \frac{1}{2}$. If $S \cap U_i = \{u_i\}$ or $\{\ell_{i-1},\ell_{i}\}$, then $c_i = 1$. If $S \cap U_i =\{u_i, \ell_i\}$ or $\{u_i,\ell_{i-1}\}$, then $c_i = 1.5$. When $U_i \cap S = U_i$, we have $c_i = 2$.  Notice that $|S| \geq \sum_{i=0}^{n-1} c_i$.

\vspace{.1in}

First, we claim that $c_i=0$ for at most one value of $i$. Suppose, to the contrary, that $c_i=0$ and $c_j=0$ where $i \neq j$. Then $u_i$ and $u_j$ are not resolved by $S$, since $d(u_i,\ell_t)=d(u_j,\ell_t)=2$ for $t \neq i-1$, $i$, $j-1$ or $j$, and $d(u_i,u_t)=d(u_j,u_t)=1$ for $t \neq i$ or $j$. Thus, $c_i =0$ for at most one $i$.

\vspace{.1in}

Next, suppose $c_i = \frac{1}{2}$ for some $i$. Suppose $U_i \cap S = \{\ell_i\}$.  If $U_{i+1} \cap S = \{\ell_i\}$, then $u_i$ and $u_{i+1}$ are not resolved, since $d(u_i,\ell_i)=d(u_{i+1},\ell_i)=1$, $d(u_i,\ell_t)=d(u_{i+1},\ell_t)=2$ for $t \neq i-1$, $i$, or $i+1$, and $d(u_i,u_t)=d(u_{i+1},u_t)=1$ for $t \neq i$ or $i+1$. Thus, $c_{i+1} \geq 1$. Similarly, if $U_i \cap S = \{\ell_{i-1}\}$, then $c_{i-1} \geq 1$. Thus, if $c_i = \frac{1}{2}$ for some $i$, then either $c_{i-1}$ or $c_{i+1}$ is at least $1$.

\vspace{.1in}

If $n = 3k+1$, then it follows from the above observations that $|S| \geq \sum_{i=0}^{n-1} c_i \geq k(\frac{1}{2} + 1 + \frac{1}{2}) + 0 = 2k = n - \left\lceil\frac{n}{3}\right\rceil$. If $n = 3k+2$, then $|S| \geq \sum_{i=0}^{n-1} c_i \geq k(\frac{1}{2} + 1 + \frac{1}{2}) + 1 + 0 = 2k + 1 = n - \left\lceil\frac{n}{3}\right\rceil$. If $n = 3k$, then $|S| \geq \sum_{i=0}^{n-1} c_i \geq (k-1)(\frac{1}{2} + 1 + \frac{1}{2}) + 1 + \frac{1}{2} + 0 = 2k - \frac{1}{2}$. Since $|S|$ is an integer, we have $|S| \geq 2k = n - \left\lceil\frac{n}{3}\right\rceil$.

\vspace{.1in}

Thus, $\dim(L(W_{1,n})) \ge n- \lceil \frac{n}{3} \rceil$ for $n \ge 6$. Therefore, $\dim(L(W_{1,n})) = n- \lceil \frac{n}{3} \rceil$ for $n \ge 6$. \hfill
\end{proof}

\begin{figure}[htbp]
\begin{center}
\scalebox{0.36}{\input{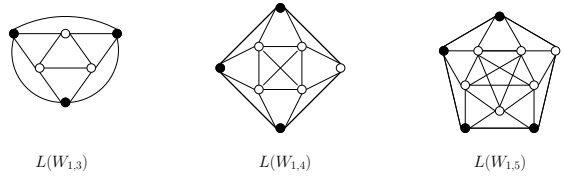}} \caption{\small{Shown in solid vertices form a minimum resolving set for $L(W_{1,n})$, for each $n \in \{3,4,5\}$}}\label{LW3}
\end{center}
\end{figure}

\begin{prop}\label{dimwheel5}
$\dim(L(W_{1,3}))=\dim(L(W_{1,4}))=3$ and $\dim(L(W_{1,5}))=4$.
\end{prop}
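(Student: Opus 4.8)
The plan is to handle the three small cases $n \in \{3,4,5\}$ directly, since the general counting argument in Theorem \ref{propdimlinewheel} was stated only for $n \ge 6$ and the line graphs here are small enough to analyze by hand with the aid of Figure \ref{LW3}. For each value of $n$, the line graph $L(W_{1,n})$ has $2n$ vertices: the $n$ ``rim" vertices $u_0,\ldots,u_{n-1}$ coming from the cycle edges, and the $n$ ``spoke" vertices $\ell_0,\ldots,\ell_{n-1}$ coming from the spokes of the wheel. First I would record the adjacency structure: all $\ell_i$ are mutually adjacent (the spokes share the hub $v$, so they form a clique $K_n$ in the line graph), each $u_i$ is adjacent to $u_{i-1}$ and $u_{i+1}$ (consecutive rim edges), and $u_i$ is adjacent to $\ell_{i-1}$ and $\ell_i$ (the rim edge shares an endpoint with its two flanking spokes). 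From this I can read off all the relevant distances.

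For the upper bounds I would simply exhibit a resolving set of the claimed size for each case and verify it directly. Concretely, I would propose a set of three vertices for $n=3$ and $n=4$ and of four vertices for $n=5$ (the solid vertices indicated in Figure \ref{LW3}), then compute the metric code of every vertex with respect to that set and check that all $2n$ codes are distinct. Since the diameter of these line graphs is small (at most $2$, as any two $\ell$'s are adjacent and every vertex is within distance $2$ of a chosen landmark in the clique), each coordinate is one of $0,1,2$, so this verification is a short finite check.

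For the lower bounds I would argue that no smaller set can resolve the graph. The key tool is twin-type reasoning: two vertices with the same distance to every vertex outside a small set must be separated by a landmark among them. For $L(W_{1,3})$ and $L(W_{1,4})$ I would show $\dim \ge 3$, and for $L(W_{1,5})$ that $\dim \ge 4$, by identifying, for any candidate set $S$ that is too small, a pair of vertices (typically two rim vertices $u_i,u_j$, or a rim and spoke vertex) that $S$ fails to resolve. The cleanest route is to adapt the $c_i$-counting idea from the proof of Theorem \ref{propdimlinewheel}, but because $n$ is small one can instead just observe that omitting too many of the $U_i = \{\ell_{i-1},u_i,\ell_i\}$ blocks leaves an unresolved pair, exactly as in the claims ``$c_i=0$ for at most one $i$" and ``$c_i=\tfrac12$ forces a neighbor to be $\ge 1$" established there. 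Alternatively, one may appeal to Theorem \ref{linebounds}: for $n \ge 5$ the bound $\dim(L(G)) \ge \lceil \log_2 \Delta(G)\rceil$ is far too weak here, so it will not suffice, and the hands-on unresolved-pair argument is needed.

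The main obstacle I anticipate is the lower bound for the smallest cases, where $n < 6$ means the clean ``one full period contributes $2$" bookkeeping of Theorem \ref{propdimlinewheel} does not immediately apply and the diameter is so small that many vertices share the same code. In particular, for $n=3$ and $n=4$ the high symmetry (the spokes form a clique and the rim is a short cycle) creates many near-twin pairs, so I must carefully enumerate which pairs a two-element or three-element set could possibly fail to resolve and exhibit such a pair in each configuration. This case analysis is finite but must be done with care to avoid missing a resolving set; the figure-guided choice of a concrete minimum resolving set for the upper bound, together with an exhaustive-by-symmetry check of all smaller sets for the lower bound, is what I expect to occupy most of the work.
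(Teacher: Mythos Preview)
Your plan is sound and would yield a correct proof, but you overlook a shortcut that the paper exploits for the lower bounds. For $n=4$ and $n=5$ the paper does not do any unresolved-pair case analysis to get $\dim \ge 3$; instead it invokes Theorem~\ref{dimbounds}, the bound $f(n,d) \le \dim(G)$. Since $L(W_{1,4})$ has $8$ vertices and diameter $2$, and $L(W_{1,5})$ has $10$ vertices and diameter $2$, the inequality $k + 2^k \ge 8$ (respectively $\ge 10$) forces $k \ge 3$ in both cases, giving $\dim \ge 3$ for free. Only the step from $3$ to $4$ when $n=5$ requires the kind of hands-on case analysis you describe, and the paper indeed does that part essentially as you propose (splitting on how a putative $3$-element $S$ meets the $\ell$'s versus the $u$'s). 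For $n=3$ the diameter bound gives only $\dim \ge 2$, so the paper uses the slicker observation that any two vertices of $L(W_{1,3})$ share at least two common neighbors, which immediately rules out a resolving set of size $2$; your twin-type reasoning would reach the same conclusion but with more bookkeeping.

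In short: your upper-bound strategy matches the paper exactly (exhibit the sets from Figure~\ref{LW3} and verify codes), and your lower-bound strategy is correct but more laborious. You correctly note that Theorem~\ref{linebounds} is too weak, and that the $c_i$-counting of Theorem~\ref{propdimlinewheel} does not give sharp bounds for $n \le 5$; what you are missing is that Theorem~\ref{dimbounds} does most of the lower-bound work here with no case analysis at all.
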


\begin{proof}
We first consider $n=3,4$ (see Figure \ref{LW3}). If $n=3$, then $\dim(L(W_{1,3}))>2$ since any two vertices $u, v$ in $L(W_{1,3})$ satisfies $|N(v) \cap N(w)| \ge 2$, and $\dim(L(W_{1,3})) \le 3$ since $\{\ell_0, \ell_1, \ell_2\}$ forms a resolving set for $L(W_{1,3})$; thus, $\dim(L(W_{1,3}))=3$. If $n=4$, then $diam(L(W_{1,4}))=2$ and $\dim(L(W_{1,4}))=3$: $\dim(L(W_{1,4})) \ge 3$ by Theorem \ref{dimbounds}, and $\dim(L(W_{1,4})) \le 3$ since $\{\ell_0, \ell_1, \ell_2\}$ forms a resolving set for $L(W_{1,4})$. Next, we consider $n=5$; notice that $\dim(L(W_{1,5})) \ge 3$ by Theorem~\ref{dimbounds} and the fact that $diam(L(W_{1,5}))=2$. One can easily check that $\{\ell_0, \ell_1, \ell_2, \ell_3\}$ forms a resolving set for $L(W_{1,5})$, and hence $\dim(L(W_{1,5})) \le 4$. Further, a case by case analysis, based on the sizes of $|S\cap \{\ell_i \mid 0 \le i \le 4\}|$ and $|S\cap \{u_i \mid 0 \le i \le 4\}|$, shows that $\dim(L(W_{1,5}))>3$. Therefore, $\dim(L(W_{1,5}))=4$.\hfill
\end{proof}

By Theorem \ref{propdimlinewheel} and Proposition \ref{dimwheel5}, we have the following

\begin{cor} \label{theoremdimwheel}
For $n \ge 3$,
\begin{equation}\nonumber
\dim(L(W_{1,n}))= \left\{
\begin{array}{ll}
3 & \mbox{ if } n=3,4 \\
4 & \mbox{ if }n=5\\
n-\lceil \frac{n}{3}\rceil & \mbox{ if } n \ge 6 .
\end{array} \right.
\end{equation}
\end{cor}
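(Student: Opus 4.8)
The plan is to obtain this result as an immediate consequence of the two results already established, namely Theorem~\ref{propdimlinewheel} and Proposition~\ref{dimwheel5}, by partitioning the range $n \ge 3$ according to which of those statements applies. First I would observe that the three small cases and the general case together exhaust all $n \ge 3$: Proposition~\ref{dimwheel5} supplies the exact values $\dim(L(W_{1,3})) = \dim(L(W_{1,4})) = 3$ and $\dim(L(W_{1,5})) = 4$, while Theorem~\ref{propdimlinewheel} gives $\dim(L(W_{1,n})) = n - \lceil \tfrac{n}{3} \rceil$ for every $n \ge 6$. Reading off these values in the three regimes $n \in \{3,4\}$, $n = 5$, and $n \ge 6$ produces precisely the piecewise formula in the statement, so there is nothing further to prove.

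The only point worth flagging is that the small cases genuinely require separate listing rather than being absorbed into the general formula, and I would want to make this explicit to justify the piecewise structure. Substituting into $n - \lceil \tfrac{n}{3} \rceil$ gives $2$, $2$, and $3$ for $n = 3,4,5$ respectively, whereas the true values are $3$, $3$, and $4$; thus the uniform expression undercounts in each of these instances. This confirms that the cutoff at $n \ge 6$ in Theorem~\ref{propdimlinewheel} is not merely a convenience of that proof but reflects genuine exceptional behavior of $L(W_{1,n})$ for small $n$, and it explains why the corollary must record the three low-order values by hand.

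There is essentially no analytical obstacle here, since both halves of the argument have been discharged earlier; the corollary is purely a bookkeeping assembly. The closest thing to a pitfall is an off-by-one or boundary error in matching each value of $n$ to the correct source statement, so the one step I would carry out carefully is verifying that the case boundaries of Proposition~\ref{dimwheel5} (covering $n = 3,4,5$) and Theorem~\ref{propdimlinewheel} (covering $n \ge 6$) tile the interval $n \ge 3$ without gap or overlap. Once that is checked, the conclusion $\dim(L(W_{1,n}))$ equals the stated piecewise function follows directly.
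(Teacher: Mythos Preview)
Your proposal is correct and matches the paper's approach exactly: the paper simply states that the corollary follows from Theorem~\ref{propdimlinewheel} and Proposition~\ref{dimwheel5} without further argument. Your additional remark that $n-\lceil n/3\rceil$ gives $2,2,3$ for $n=3,4,5$ rather than the true values $3,3,4$ is a helpful sanity check not present in the paper, but the underlying logic is identical.
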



\subsection{Bouquet of Circles}

The bouquet of circles has been studied as a motivating example to introduce the fundamental group on a graph (see
p.189, \cite{massey}). More recently, Llibre and Todd \cite{bouquet}, for instance, studied a class of maps on a bouquet of circles from a dynamical
system perspective.\\

It is well known that $\dim(C_n)=2$ for $n \ge 3$. For $k_n \ge k_{n-1} \ge \ldots \ge k_2 \ge k_1 \ge 2$, let $B_n=(k_1+1, k_2+1, \ldots, k_n+1)$ be a bouquet of $n \ge 2$ circles $C^1$, $C^2$, $\ldots$, $C^n$, with the cut-vertex $v$, where $k_i+1$ is the number of vertices of $C^i$ ($1 \le i \le n$). Let $V(C^i)=\{v, w_{i,1}, w_{i,2}, \ldots, w_{i, k_i}\}$ such that $vw_{i,1} \in E(B_n)$ and $vw_{i, k_i} \in E(B_n)$, and let the vertices in $C^i$ be cyclically labeled, where $1 \le i \le n$. See Figure \ref{bouquetlabeling} for $B_4=(3,4,5,6)$ and its line graph.\\

\begin{figure}[htbp]
\begin{center}
\scalebox{0.37}{\input{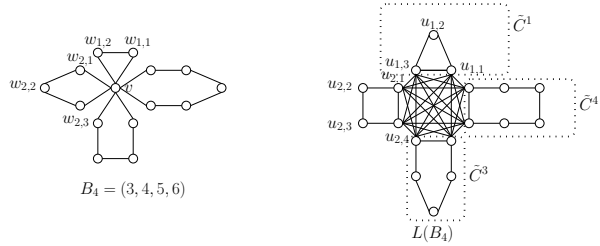}} \caption{\small{Labelings of a bouquet of four circles $B_4=(3,4,5,6)$ and its line graph}}\label{bouquetlabeling}
\end{center}
\end{figure}

\begin{theorem} \cite{bouquetD}
Let $B_n=(k_1+1, k_2+1, \ldots, k_n+1)$ be a bouquet of $n \ge 2$ circles with
a cut-vertex. If $x$ is the number of even cycles of $B_n$, then
\begin{equation*}\label{bouquetdim}
\dim(B_n)= \left\{
\begin{array}{ll}
n & \mbox{ if } x=0 \\
n+x-1 & \mbox{ if } x \ge 1 .
\end{array} \right.
\end{equation*}
\end{theorem}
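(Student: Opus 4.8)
The plan is to prove matching lower and upper bounds. Throughout, write $m_i=k_i+1$ for the length of the cycle $C^i$, and for a vertex $w_{i,t}$ call $r(w_{i,t})=\min\{t,m_i-t\}=d(v,w_{i,t})$ its \emph{radius}. The one structural fact driving everything is that $v$ is a cut-vertex: any landmark $s$ lying outside $C^i$ (including $s=v$ itself) satisfies $d(s,w_{i,t})=d(s,v)+r(w_{i,t})$, so from outside $C^i$ one reads off only the radius of a vertex of $C^i$. Consequently the reflection of $C^i$ fixing $v$ and swapping $w_{i,t}\leftrightarrow w_{i,m_i-t}$ can be detected only by landmarks lying strictly inside $C^i$; in particular the pair $\{w_{i,1},w_{i,k_i}\}$ of neighbors of $v$ can be resolved only from within $C^i$.

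For the lower bound I would first use this to conclude that every resolving set meets each $C^i\setminus\{v\}$, giving $\dim(B_n)\ge n$. The extra $x-1$ when $x\ge 1$ comes from an explicit \emph{masquerade collision}. Fix an even cycle $C^i$ with a single interior landmark $s_i=w_{i,a}$; to resolve $\{w_{i,1},w_{i,k_i}\}$ the vertex $s_i$ must avoid the antipode $w_{i,m_i/2}$ (which is equidistant from both), and a short computation then shows that the neighbor of $v$ on the far side of $s_i$ satisfies $d(s_i,\cdot)=d(s_i,v)+1$, i.e.\ it \emph{masquerades} as a radius-$1$ vertex of another cycle. If two distinct even cycles $C^i,C^j$ each carried a single interior landmark, their two far neighbors of $v$ would receive identical coordinates from $s_i$, from $s_j$, and, being radius-$1$ vertices outside the remaining cycles, from every other landmark (including $v$, which gives both distance $1$), contradicting that $S$ resolves. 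Hence at most one even cycle has a single interior landmark, so at least $x-1$ even cycles contribute $\ge 2$ landmarks each, and $\dim(B_n)\ge (n-x)+2(x-1)+1=n+x-1$.

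For the upper bound I would exhibit sets of the stated sizes. Put $\ell_i=\lfloor m_i/2\rfloor$ and call $w_{i,\ell_i}$ the \emph{deep} vertex of $C^i$; its eccentricity within $C^i$ equals $\ell_i$, whereas every vertex outside $C^i$ is at distance $\ge \ell_i+1$ from it, so a deep landmark \emph{separates} its own cycle from the rest of $B_n$. When $x=0$ I take one deep landmark per cycle: a deep vertex of an odd cycle is off the reflection axis, so it resolves the within-cycle symmetric pairs, while distinct radii (read off via another cycle's deep landmark, using $n\ge 2$) together with the separation property handle all remaining pairs, giving a resolving set of size $n$. When $x\ge 1$ I place a deep landmark in every odd cycle, the pair $\{w_{i,1},w_{i,\ell_i}\}$ in every even cycle but one (these two are non-antipodal, hence resolve that cycle internally), and the single off-axis landmark $w_{i,1}$ in one distinguished even cycle, for a total of $(n-x)+2(x-1)+1=n+x-1$.

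The hard part will be verifying this last construction, since the distinguished even cycle has no deep landmark and so its far-side vertices all masquerade, exactly the phenomenon used in the lower bound. The key point that makes it go through is that every \emph{other} cycle carries a deep landmark: each masquerading vertex $w_{i,t}$ of the distinguished cycle has $d(w_{i,t},w_{j,\ell_j})=\ell_j+r(w_{i,t})\ge \ell_j+1$, which places it strictly outside the distance-$\ell_j$ band containing all of $C^j$, so it cannot share a code with any vertex of $C^j$. I would organize the write-up as one lemma isolating the radius/separation dichotomy, followed by checking the three types of pairs in turn (both vertices in the distinguished cycle, distinguished-versus-other, and other-versus-other), each of which reduces to the separation property or to the off-axis landmark resolving a symmetric pair.
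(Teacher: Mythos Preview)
The paper does not prove this theorem: it is quoted verbatim from \cite{bouquetD} (Iswadi, Baskoro, Salman, Simanjuntak) and stated without proof, serving only as background before the authors' own results on $\dim(L(B_n))$. There is therefore no ``paper's own proof'' to compare your proposal against.

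That said, your argument is sound on its own terms. The lower bound is driven by exactly the right two observations: (i) the cut-vertex structure forces every landmark outside $C^i$ to read only the radius $r(\cdot)=d(v,\cdot)$, so each $C^i\setminus\{v\}$ must contain a landmark; and (ii) the ``masquerade'' collision you describe is genuine --- if two even cycles $C^i,C^j$ each carry a single interior landmark $s_i,s_j$ (necessarily off the antipode), then the far neighbors $p\in C^i$ and $q\in C^j$ of $v$ satisfy $d(s_i,p)=d(s_i,v)+1=d(s_i,q)$, symmetrically for $s_j$, and every other landmark sees both at radius $1$. Your upper-bound constructions are also correct. The only place to be careful in a full write-up is the verification that a single deep landmark $w_{i,\lfloor m_i/2\rfloor}$ in an \emph{odd} cycle really does resolve all reflection pairs $(w_{i,s},w_{i,m_i-s})$; this is the computation $d(w_{i,\ell_i},w_{i,s})=\ell_i-s$ versus $d(w_{i,\ell_i},w_{i,m_i-s})=\ell_i+1-s$, which you allude to but do not spell out. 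With that check made explicit and the three ``types of pairs'' organized as you outline, the proof is complete.
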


Referring to Figure \ref{bouquetlabeling}, let $\tilde{C}^1=\{u_{1, j} \mid 1 \le j \le k_1\}, \tilde{C}^2=\{u_{2, j} \mid 1 \le j \le k_2\}, \ldots, \tilde{C}^n=\{u_{n, j} \mid 1 \le j \le k_n\}$ such that $\langle \tilde{C}^i\rangle=L(\langle V(C^i) \rangle)$ for $C^i \subseteq B_n$, where $1 \le i \le n$ and $n \ge 2$.

\begin{theorem} \label{DimLBouquet}
Let $B_n=(k_1+1, k_2+1, \ldots, k_n+1)$ be a bouquet of $n \ge 2$ circles with a cut-vertex. Then $\dim(L(B_n))=2n-1$.
\end{theorem}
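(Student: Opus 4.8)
The plan is to exploit the simple global structure of $L(B_n)$. Write $a_i$ and $b_i$ for the two vertices of $L(B_n)$ corresponding to the edges of $C^i$ incident to the cut-vertex $v$. Since all $2n$ such edges meet at $v$, the vertices $\{a_i,b_i : 1\le i\le n\}$ are pairwise adjacent, so they induce a clique $Q\cong K_{2n}$; the vertex set of each $\tilde C^i$ (a cycle of length $k_i+1$) meets $Q$ in exactly $\{a_i,b_i\}$, and the sets $V(\tilde C^i)$ partition $V(L(B_n))$. The computational heart is a distance formula: for a vertex $p$ on $\tilde C^i$ at cyclic position $j$ (with $a_i,b_i$ at positions $0,k_i$) and any vertex $s$ on a different cycle $\tilde C^\ell$, routing every shortest path through $Q$ gives $d(s,p)=(\text{a constant depending only on }s)+\min(j,k_i-j)$. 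In particular every vertex off $\tilde C^i$ is equidistant from all clique vertices not on $\tilde C^i$, and ``sees'' $\tilde C^i$ only through the folded coordinate $f_i(j):=\min(j,k_i-j)$. This single fact drives both bounds.

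For the upper bound I would exhibit $S=\{a_1\}\cup\{a_i,b_i : 2\le i\le n\}$, of size $2n-1$, and check that it resolves $L(B_n)$. The key points are: (i) for $i\ge 2$, the adjacent pair $\{a_i,b_i\}$ already resolves $\tilde C^i$ internally, since two adjacent vertices resolve any cycle; (ii) for $i=1$, the pair $(d(\cdot,a_1),f_1(\cdot))$ is injective on $\tilde C^1$, which is a short monotonicity check; and (iii) two vertices on different cycles are separated because the code already reveals which cycle a vertex lies on (via which clique coordinates attain the minimum entry of the code), the remaining antipodal ambiguities being ruled out by comparing two distinct clique coordinates.

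For the lower bound let $S$ be any resolving set and set $s_i=|S\cap V(\tilde C^i)|$, so $|S|=\sum_i s_i$. First, $s_i\ge 1$ for every $i$: if $s_i=0$ then, by the distance formula, every element of $S$ is equidistant from $a_i$ and $b_i$, so this pair is unresolved. Second --- the crux --- at most one cycle has $s_i=1$. Suppose $s_i=s_j=1$ with single members $x\in\tilde C^i$ and $y\in\tilde C^j$. Only vertices of $\tilde C^i$ can separate $a_i$ from $b_i$, so $x$ must separate them; using the reflection automorphism of $\tilde C^i$ (which fixes $L(B_n)$ elsewhere) I may assume $d(x,a_i)\le d(x,b_i)$, whence $x\neq b_i$ and a short computation gives $d(x,b_i)=m_x+1$, where $m_x:=\min(d(x,a_i),d(x,b_i))$; but $m_x+1$ is exactly the common distance from $x$ to every clique vertex off $\tilde C^i$. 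Symmetrically $b_j\notin S$ and $d(y,b_j)=m_y+1$ is the common distance from $y$ to every clique vertex off $\tilde C^j$. Then $x$, $y$, and every member of $S$ on the remaining cycles are all equidistant from $b_i$ and $b_j$, so $\{b_i,b_j\}$ is unresolved, a contradiction. Hence all but at most one of the $s_i$ are $\ge 2$, giving $|S|\ge 2(n-1)+1=2n-1$.

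I expect the lower bound, specifically isolating the offending pair $\{b_i,b_j\}$ for two deficient cycles, to be the main obstacle: it requires the precise antipode analysis (the single vertex $x$ must both separate $a_i$ from $b_i$ and yet be equidistant from $b_i$ and the distant clique vertices) together with the reflection-symmetry normalization. By contrast, the cross-cycle bookkeeping in the upper bound is routine once the distance formula and the code-minimum observation are in place.
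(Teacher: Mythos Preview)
Your argument is correct and follows the same overall two-step skeleton as the paper's proof: each $\tilde C^i$ must meet $S$, and at most one can meet it in a single vertex, the latter shown by exhibiting an unresolved pair among the four clique vertices coming from two ``deficient'' cycles. Your lower bound is essentially the paper's argument made more explicit: where the paper observes that ``a vertex from $\{u_{1,1},u_{1,k_1}\}$ and a vertex from $\{u_{2,1},u_{2,k_2}\}$ will necessarily share the same code,'' you use the reflection automorphisms to normalize and then name the pair $\{b_i,b_j\}$ directly via the identity $d(x,b_i)=m_x+1$. The one genuine difference is the upper bound: the paper takes $\bigl(\bigcup_{i=1}^{n-1}\{u_{i,1},u_{i,\lceil k_i/2\rceil+1}\}\bigr)\cup\{u_{n,\lceil k_n/2\rceil+1}\}$, pairing each clique vertex with a near-antipodal partner on its cycle, whereas you take $2n-1$ of the $2n$ clique vertices. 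Your choice makes the intra-cycle resolution for $i\ge 2$ immediate (adjacent pair on a cycle) and pushes all the work into the cross-cycle check and the single cycle $\tilde C^1$; the paper's choice spreads the verification more evenly across cycles. Both are routine once your distance formula $d(s,p)=\text{const}(s)+f_i(j)$ is in hand, so neither buys a real advantage.
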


\begin{proof}
Let $S$ be a minimum resolving set for $L(B_n)$, $n \ge 2$. Let $S^i=S \cap \tilde{C}^i$, where $1 \le i \le n$. If $|S^i|=0$ for some $i$, then $u_{i,1}$ and $u_{i, k_i}$ will have the same code; thus $|S^i| \ge 1$ for each $i \in \{1, 2, \ldots, n\}$. Next, we claim $|S^i| \ge 2$ for all but one $i \in \{1, 2, \ldots, n\}$. Assume, to the contrary, there are $S^i$ and $S^j$ with $|S^i|=1=|S^j|$ and put, without loss of generality, $i=1$ and $j=2$. We will show that the set $U=\{u_{1,1}, u_{1, k_1}, u_{2,1}, u_{2, k_2}\}$ can not be resolved by $S$. First, it is clear that every $s \in S - (S^1 \cup S^2)$ has the same distance to each vertex in $U$. Now, we show $x_1$ and $x_2$ in $S^1$ and $S^2$, respectively, can not resolve $U$. Notice $|d(u_{i,1}, x_i)-d(u_{i, k_i}, x_i)|$ is $0$ or $1$, and $d(u_{j,1}, x_i)=d(u_{j, k_j}, x_i)$ for $i \neq j$, where $i, j \in \{1,2\}$. To have any chance of resolving $U$, we must have $|d(u_{i,1}, x_i)-d(u_{i, k_i}, x_i)|=1$ for $i \in \{1,2\}$. But, in this case, a vertex from $\{u_{1,1}, u_{1, k_1}\}$ and a vertex from $\{u_{2,1}, u_{2, k_2}\}$ will necessarily share the same code. We hereby remark that $K_4$ induced by $U$ still requires three vertices to resolve as it is embedded in $\tilde{C}^1 \cup \tilde{C}^2$, in contrast to the situation in Figure \ref{Ksub}. Thus, $\dim(L(B_n)) \ge 2n-1$. Next, one can easily check that $(\cup_{i=1}^{n-1}\{u_{i,1}, u_{i, \lceil\frac{k_i}{2}\rceil+1}\}) \cup \{u_{n, \lceil\frac{k_n}{2}\rceil+1}\}$ forms a resolving set for $L(B_n)$, and thus $\dim(L(B_n)) \le 2n-1$. Therefore, $\dim(L(B_n))=2n-1$. \hfill
\end{proof}

\begin{figure}[htbp]
\begin{center}
\scalebox{0.37}{\input{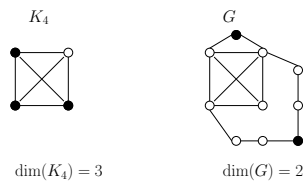}} \caption{\small{A graph $G \supset K_n$ such that $\dim(G) < \dim(K_n)$; here, the solid vertices form a minimum resolving set for each graph}}\label{Ksub}
\end{center}
\end{figure}


\section{Zero Forcing Number of Some Line Graphs}

We first define \emph{edge zero forcing} in a graph.

\begin{dfn} \label{edgezf}  Let $G$ be a connected graph of order $n \ge 2$. In analogy with the usual (vertex) color-change rule, we define \emph{ the edge color-change rule} as follows. Let each edge $e\in E(G)$ be given either the color black or the color white. A black edge $e_1$ forces the color of $e_2$ from white to black if and only if $e_2$ is the only white edge adjacent to $e_1$. An \emph{edge zero forcing set} $F\subseteq E(G)$ and the edge zero forcing number $Z_e(G)$ are then analogously defined.
\end{dfn}

\begin{obs}\label{zf-edgezf} Let $G$ be a connected graph of order $n \ge 2$. Each edge zero forcing set of $G$ corresponds to a (vertex) zero forcing set of $L(G)$;
this is a direct consequence of the definition of $L(G)$.
\end{obs}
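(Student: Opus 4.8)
The plan is to exhibit the natural bijection between $E(G)$ and $V(L(G))$ and to show that, under this bijection, the edge color-change rule of Definition~\ref{edgezf} is literally the vertex color-change rule applied in $L(G)$; the statement then follows immediately. Since the proof is entirely formal, I expect no substantive obstacle, only the routine bookkeeping of keeping the two forcing processes in lockstep.

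First I would fix the bijection $\phi : E(G) \to V(L(G))$ furnished by the definition of the line graph, sending each edge $e \in E(G)$ to the vertex $\phi(e) \in V(L(G))$ corresponding to it. By the definition of $L(G)$, two edges $e_1, e_2 \in E(G)$ are adjacent in $G$ (i.e., share an endpoint) if and only if $\phi(e_1)$ and $\phi(e_2)$ are adjacent in $L(G)$; equivalently, $\phi(e_2) \in N(\phi(e_1))$ computed in $L(G)$. Thus $\phi$ carries the adjacency relation among edges of $G$ precisely to the (open) neighbor relation among vertices of $L(G)$.

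Next I would transport a black/white coloring of $E(G)$ to a coloring of $V(L(G))$ via $\phi$, declaring $\phi(e)$ black exactly when $e$ is black. Under this translation, the hypothesis of the edge color-change rule --- ``$e_2$ is the only white edge adjacent to the black edge $e_1$'' --- becomes verbatim the hypothesis of the vertex color-change rule in $L(G)$: ``$\phi(e_2)$ is the only white neighbor of the black vertex $\phi(e_1)$.'' Hence a single application of the edge color-change rule in $G$ corresponds to a single application of the vertex color-change rule in $L(G)$, and conversely.

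Finally, by induction on the number of applications of the rule, an initial black edge set $F \subseteq E(G)$ blackens all of $E(G)$ under the edge color-change rule if and only if $\phi(F) \subseteq V(L(G))$ blackens all of $V(L(G))$ under the vertex color-change rule. Therefore $F$ is an edge zero forcing set of $G$ if and only if $\phi(F)$ is a (vertex) zero forcing set of $L(G)$, which yields the stated correspondence. The only point requiring (routine) care is the inductive argument that the two forcing processes remain synchronized step for step, and this is immediate once the adjacency and rule correspondences above are in place.
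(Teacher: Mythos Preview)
Your argument is correct and is precisely the unpacking of what the paper leaves implicit: the paper offers no proof beyond the remark that the observation is ``a direct consequence of the definition of $L(G)$,'' and your bijection $\phi$ together with the adjacency-preserving translation of the color-change rule is exactly that consequence spelled out. There is nothing to add or correct.
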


We recall the lower bounds of the zero forcing number of a connected graph $G$ and its line graph $L(G)$.

\begin{theorem}\cite{min-degree}\label{mindegree}
For any connected graph $G$ of order $n \ge 2$, $Z(G) \ge \delta(G)$.
\end{theorem}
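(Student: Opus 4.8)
The plan is to argue directly from the mechanics of the color-change rule, focusing on the very first force carried out by a minimum zero forcing set. First I would fix a zero forcing set $S$ with $|S| = Z(G)$ and observe that, since $G$ is connected of order $n \ge 2$, one always has $Z(G) \le n-1$: for any vertex $v$, the set $V(G) \setminus \{v\}$ is a zero forcing set, because $v$ has some (necessarily black) neighbor $u$ of which $v$ is the only white neighbor, so $u \rightarrow v$. Hence the minimum set $S$ is a proper subset of $V(G)$, and at least one application of the color-change rule must take place during the forcing process.

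Next I would examine that first force. Let $u \rightarrow w$ be the first force applied, so that immediately before this step the set of black vertices is exactly $S$. By the color-change rule, $u$ is black (so $u \in S$) and $w$ is its unique white neighbor; consequently every neighbor of $u$ other than $w$ is black, i.e. $N(u) \setminus \{w\} \subseteq S$. Since $G$ is simple, $u \notin N(u)$, so $u$ together with the $\deg_G(u) - 1$ vertices of $N(u) \setminus \{w\}$ are all distinct and all lie in $S$. This gives
\[
Z(G) = |S| \ge 1 + \bigl(\deg_G(u) - 1\bigr) = \deg_G(u) \ge \delta(G),
\]
which is the desired inequality.

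The argument is short, and I expect no serious obstacle. The only point requiring care is the justification that a first force genuinely exists, which is exactly where connectivity and the bound $Z(G) \le n-1$ are invoked to rule out the degenerate case $S = V(G)$ (in which no force occurs at all). A secondary subtlety is to count $u$ itself separately from its neighbors, which is guaranteed by simplicity (no loops); everything else follows immediately from the definition of the color-change rule.
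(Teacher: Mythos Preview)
Your argument is correct. The paper does not supply its own proof of this theorem; it simply cites the result from \cite{min-degree} and uses it as a tool. So there is no in-paper proof to compare against.

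For completeness: your reasoning is the standard one. The only places one could quibble are exactly the two you already flagged---ensuring $S \neq V(G)$ so that a first force exists, and using simplicity to guarantee $u \notin N(u)$---and you handled both. The inequality $Z(G) \ge \deg_G(u) \ge \delta(G)$ then follows immediately.
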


\begin{lemma}\cite{GbarZ} \label{Zcompletesub}
If a graph $G$ contains as a subgraph the complete graph $K_m$ ($m \ge 2$), then $Z(G) \ge Z(K_m)=m-1$.
\end{lemma}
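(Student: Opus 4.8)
The plan is to bound an arbitrary zero forcing set $S$ of $G$ from below by exploiting the clique structure. Let $K$ denote the vertex set of a fixed copy of $K_m$ in $G$, so $|K|=m$ and every two vertices of $K$ are adjacent. Fix a zero forcing set $S$ together with a valid sequence of color changes and, for each forced vertex, record the black vertex that forced it. Since each vertex forces at most one vertex and is forced by at most one vertex, the ``$u$ forces $w$'' relation decomposes $V(G)$ into $|S|$ vertex-disjoint \emph{forcing chains}, one beginning at each vertex of $S$; this is the standard chain decomposition attached to a forcing process. I would then intersect this partition with $K$ and count.

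The core of the argument consists of two observations about a single chain $u_1\to u_2\to\cdots\to u_t$, using that forces within a chain occur in strictly increasing time order. First, if some $u_a\in K$ forces $u_{a+1}$, then at that instant $u_{a+1}$ is the only white neighbor of $u_a$; since $u_a$ is adjacent to all of $K\setminus\{u_a\}$, every vertex of $K\setminus\{u_a,u_{a+1}\}$ must already be black. Hence a later chain vertex $u_b\in K$ with $b\ge a+2$ would have to be black at that instant, yet it only turns black at the strictly later time it is itself forced, a contradiction. This shows each chain meets $K$ in at most two vertices, which moreover must be consecutive in the chain. Second, when a chain does contain two vertices of $K$, the same observation forces the later one to be the unique white vertex of $K$ at the moment it is colored, so it must be the very last vertex of $K$ to turn black. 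Because that last vertex is a single, well-defined vertex lying in exactly one chain, at most one chain can contain two vertices of $K$.

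Combining these, the $|S|$ chains partition $K$ into parts of size at most two with at most one part of size exactly two, so $m=|K|\le 2+(|S|-1)=|S|+1$, whence $|S|\ge m-1$. As $S$ was an arbitrary zero forcing set, $Z(G)\ge m-1$. Specializing to $G=K_m$ gives $Z(K_m)\ge m-1$, and the reverse inequality is immediate since coloring any $m-1$ vertices black leaves a single white vertex that is forced at once; thus $Z(K_m)=m-1$ and the claimed $Z(G)\ge Z(K_m)$ follows.

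The main obstacle I anticipate is the timing bookkeeping in the chain decomposition: I would need to state cleanly that forces within one chain happen in strictly increasing order, and to pin down that the ``second $K$-vertex'' of any two-vertex chain is genuinely the last vertex of $K$ to be colored. This uniqueness is exactly what upgrades the naive count $m\le 2|S|$ (which only yields $Z(G)\ge m/2$) to the sharp $m\le |S|+1$, so it is the step I would write with the most care.
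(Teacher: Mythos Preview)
Your argument is correct. The forcing-chain decomposition is the right tool here, and both of your key observations hold up: the consecutiveness of any two $K$-vertices on a chain follows exactly as you wrote (if $u_a\in K$ forces $u_{a+1}$, every other $K$-vertex is already black, ruling out any later chain vertex in $K$), and the ``last $K$-vertex to turn black'' argument cleanly pins down that at most one chain meets $K$ twice. The count $m\le |S|+1$ then falls out, and your treatment of $Z(K_m)=m-1$ is standard.

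As for comparison: the paper does not prove this lemma at all --- it is quoted from the authors' companion paper \cite{GbarZ} and simply invoked here (only the consequence $Z(L(G))\ge \Delta(G)-1$ in Proposition~\ref{ZMAX} is used). So there is no in-paper proof to set yours against. Your forcing-chain argument is a natural and self-contained route; the only cosmetic point is that the inequality $m\le 2+(|S|-1)$ tacitly assumes some chain actually meets $K$ twice, but the alternative case $m\le |S|$ is of course stronger, so the conclusion $|S|\ge m-1$ is unaffected.
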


\begin{prop} \cite{AIM} \label{line_complete}
For the complete graph $K_n$ of order $n \ge 4$, $Z(L(K_n)) = \frac{1}{2}(n^2-3n+4)$.
\end{prop}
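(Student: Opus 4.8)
The plan is to move entirely to the edge side of the problem. Observation~\ref{zf-edgezf} exhibits a correspondence between edge zero forcing sets of $G$ and vertex zero forcing sets of $L(G)$, and since the bijection between $E(G)$ and $V(L(G))$ preserves adjacency, the edge and vertex color-change rules match in both directions; hence $Z(L(K_n)) = Z_e(K_n)$ and it suffices to compute the edge zero forcing number of $K_n$. Fix the vertex set $\{1,\dots,n\}$, let $F$ be an edge zero forcing set, and let $W = E(K_n)\setminus F$ be the white edges, which I regard as a spanning subgraph $H=(\{1,\dots,n\},W)$ of $K_n$. Because $L(K_n)$ has order $\binom{n}{2}$, establishing $Z_e(K_n)=\tfrac12(n^2-3n+4)$ is equivalent to showing that a forcible white set $W$ can have at most $\binom{n}{2}-\tfrac12(n^2-3n+4)=n-2$ edges, with equality attained.

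The heart of the argument is a one-step characterization of the edge color-change rule in this language. I would show that a white edge $e=\{x,z\}$ is forced in a single step precisely when (i) at least one endpoint of $e$ has degree $1$ in the current white graph $H$ (so $e$ is its unique white edge) and (ii) $H$ has an isolated vertex $y$. The reason is that a forcing edge $f$ must share an endpoint with $e$ and have $e$ as its only white neighbor; writing $f=\{x,y\}$, the white edges adjacent to $f$ number exactly $\deg_H(x)+\deg_H(y)$, so this sum must equal $1$, forcing one endpoint to have white-degree $1$ and the other to be isolated. Conversely such an $f=\{x,y\}$ is automatically black (since $y$ is isolated) and forces $e$. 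Consequently the entire forcing process is nothing but repeated peeling of pendant edges of $H$, subject to an isolated vertex being present. An isolated vertex is needed at the very first step; and once a pendant edge $\{x,z\}$ with $\deg_H(x)=1$ is deleted, $x$ itself becomes isolated, so the side condition sustains itself thereafter. Thus $W$ is forcible if and only if $H$ reduces to the empty graph under pendant-edge deletion \emph{and} $H$ has an isolated vertex initially.

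Two elementary facts then finish the count. First, a graph reduces to the edgeless graph by pendant-edge deletions if and only if it is a forest: a pendant edge is never a cycle edge, and deleting pendant edges never lowers the degree of a cycle vertex below $2$, so any cycle survives indefinitely, whereas a forest always has a leaf until it is edgeless. Second, a forest on $n$ vertices with an isolated vertex spans at most $n-1$ vertices, and a forest on $k$ vertices with $t\ge 1$ tree-components has $k-t\le (n-1)-1=n-2$ edges. Hence every forcible $W$ satisfies $|W|\le n-2$, giving $Z_e(K_n)\ge \binom{n}{2}-(n-2)=\tfrac12(n^2-3n+4)$. For the matching upper bound I would take $W=\{\{1,j\}:3\le j\le n\}$, a star of $n-2$ edges leaving vertex $2$ isolated: it is a forest with an isolated vertex, hence forcible, and concretely each black edge $\{2,j\}$ has $\{1,j\}$ as its only white neighbor and forces it in one step. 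This gives $Z(L(K_n))=Z_e(K_n)=\tfrac12(n^2-3n+4)$.

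The main obstacle will be the one-step characterization, and in particular correctly isolating the role of the isolated-vertex condition: one must verify both that no force is possible while $H$ has no isolated vertex and that the condition regenerates itself as soon as the first pendant edge is peeled. Once that is in place, the equivalence ``forcible $\Longleftrightarrow$ forest with an isolated vertex'' together with the trivial edge count does all the remaining work.
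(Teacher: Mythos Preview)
The paper does not supply a proof of this proposition; it is quoted from \cite{AIM} without argument. Your proof is correct and self-contained. The identification $Z(L(K_n))=Z_e(K_n)$ is exact (not merely an inequality) because the bijection $E(G)\leftrightarrow V(L(G))$ preserves adjacency in both directions, so Observation~\ref{zf-edgezf} upgrades to an equality. Your one-step characterization is right: for a black edge $f=\{x,y\}$ in $K_n$ the white neighbours of $f$ number exactly $\deg_H(x)+\deg_H(y)$, so $f$ forces iff this sum is $1$, which forces one endpoint of $f$ to be isolated in $H$ and the other to be a leaf whose pendant edge is the one forced. The self-sustaining isolated-vertex condition, the equivalence with ``$H$ is a forest with an isolated vertex'', and the bound $|W|\le n-2$ (a forest on $n$ vertices with at least two components has at most $n-2$ edges) all go through as you say, and the star $\{\{1,j\}:3\le j\le n\}$ with vertex $2$ isolated gives sharpness.

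As a point of comparison, the treatment in \cite{AIM} sits inside the minimum-rank framework: there the lower bound on $Z(L(K_n))$ comes via $Z\ge M$ and an explicit matrix realizing the maximum nullity, while the upper bound is an explicit zero forcing set. Your approach is purely combinatorial and avoids any linear-algebraic input, which is a pleasant bonus; it also yields the structural characterization of \emph{all} optimal zero forcing sets of $L(K_n)$ (complements of spanning forests of $K_n$ with an isolated vertex and $n-2$ edges), something the rank argument does not give.
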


\begin{prop}\label{ZMAX}
For any connected graph $G$ of order $n \ge 2$, $$\Delta(G) -1 \le Z(L(G)) \le |E(G)|-(\delta(G)-1),$$ and both bounds are sharp.
\end{prop}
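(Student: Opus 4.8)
The plan is to prove the two inequalities separately and then exhibit extremal graphs for each.

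For the lower bound I would fix a vertex $v$ with $\deg_G(v)=\Delta(G)$ and observe that the $\Delta(G)$ edges incident with $v$ are pairwise adjacent in $G$, so their images are pairwise adjacent in $L(G)$ and thus span a copy of $K_{\Delta(G)}$ as a subgraph of $L(G)$. Applying Lemma \ref{Zcompletesub} then yields $Z(L(G)) \ge Z(K_{\Delta(G)}) = \Delta(G)-1$ whenever $\Delta(G) \ge 2$; the degenerate case $\Delta(G)=1$ (i.e.\ $G=K_2$) is immediate. For the upper bound I would pass to edge zero forcing via Observation \ref{zf-edgezf}: it suffices to produce an edge zero forcing set $F$ of $G$ with $|F| = |E(G)|-(\delta(G)-1)$. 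Writing $\delta := \delta(G)$, I fix a vertex $v$ of degree $\delta$ with incident edges $e_1,\dots,e_\delta$, where $e_i = vw_i$, and color every edge black except the $\delta-1$ edges $e_1,\dots,e_{\delta-1}$, so that the only white edges are incident with $v$. (If $\delta=1$ there is nothing to force and the bound is trivial, so assume $\delta \ge 2$.)

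The crux is to show these white edges all get forced, and here the naive idea fails: one might hope each white $e_i$ is forced from its far endpoint $w_i$, but a triangle $vw_iw_j$ makes the edge $w_iw_j$ adjacent to the two white edges $e_i$ and $e_j$ simultaneously, which is exactly what occurs in dense graphs. The fix is to force iteratively and track the current white set $\{e_i : i \in A\}$ with $A \subseteq \{1,\dots,\delta-1\}$. I would show the process never stalls: if $|A|=1$, the black edge $e_\delta$ at $v$ has the lone white edge as its unique white neighbor and forces it; if $|A| \ge 2$ and no white edge could be forced, then for each $i \in A$ every neighbor of $w_i$ other than $v$ must lie in $\{w_j : j \in A\}$, whence $\deg_G(w_i) \le 1+(|A|-1) = |A| \le \delta-1 < \delta$, contradicting $\deg_G(w_i) \ge \delta(G)$. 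Thus at every stage some white edge can be blackened, $F$ is an edge zero forcing set, and $Z(L(G)) \le |E(G)|-(\delta(G)-1)$. The main obstacle is precisely this nonstalling claim: the degree inequality is the decisive point, and correctly separating the white neighbors of a forcing edge into those at $v$ and those at $w_i$ is where the real bookkeeping lies.

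Finally, for sharpness I would use stars and complete graphs. For the star $K_{1,n}$ one has $\Delta(K_{1,n})=n$ and $L(K_{1,n})=K_n$, so $Z(L(K_{1,n})) = n-1 = \Delta(K_{1,n})-1$, realizing the lower bound. For the complete graph $K_n$ with $n \ge 4$, Proposition \ref{line_complete} gives $Z(L(K_n)) = \frac{1}{2}(n^2-3n+4)$, which equals $\binom{n}{2}-(n-2) = |E(K_n)|-(\delta(K_n)-1)$, realizing the upper bound. Hence both inequalities are best possible.
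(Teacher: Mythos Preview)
Your proof is correct and follows essentially the same approach as the paper: identical lower bound via the $K_{\Delta(G)}$ subgraph and Lemma~\ref{Zcompletesub}, identical edge zero forcing set for the upper bound (all edges except $\delta-1$ of those at a minimum-degree vertex), and the same sharpness examples (stars and complete graphs). The only cosmetic difference is that the paper shows every white edge $v_0v_i$ can be forced in a single step---the same count $|A_i|\le\delta-1<\deg_G(v_i)$ already guarantees a good black edge at each $w_i$ from the outset---whereas you phrase this as an iterative non-stalling argument; the underlying degree inequality is the same.
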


\begin{proof}
The lower bound immediately follows from Lemma~\ref{Zcompletesub}, since $L(G)$ contains $K_{\Delta(G)}$ as a subgraph. For the sharpness of the lower bound, take $G=K_{1, n-1}$.\\

The upper bound obviously holds for $\delta(G)\leq 2$. So, let $v_0\in V(G)$ be a vertex of degree $\delta=\delta(G)\geq 3$, and let $v_1, v_2, \ldots, v_\delta$ be the vertices adjacent to $v_0$.
We claim that $E(G) - \{v_0v_2, \ldots, v_0v_\delta\}$ forms an edge zero forcing set for $G$. The claim follows from the observation that, for each $i\in I=\{2, \ldots, \delta\}$, there exists a black edge $e$ incident
with $v_i$ and adjacent to $v_0v_j$ ($j\in I$) exactly when $j=i$ (see Figure~\ref{edgezero}): this is because the edges incident with $v_i$ ($i \in I$) and not satisfying the requirement of the observation lie in the set $A_i=\{v_iv_j: j=0 \mbox{ or } j\in I -\{i\}\}$, and $|A_i|\leq \delta-1< \deg_G(v_i)$. For the sharpness of the upper bound, take $G=K_n$ (see Proposition \ref{line_complete}). \hfill
\end{proof}

\begin{figure}[htbp]
\begin{center}
\scalebox{0.4}{\input{edgezero.pstex_t}} \caption{\small{Here, $\deg_G(v_0)=\delta(G)=4$ and the edges drawn in dotted lines are excluded from the edge zero forcing set of $G$. Note that $A_4=\{v_4v_0, v_4v_2, v_4v_3\}$ and, a priori, an edge in $A_4$ cannot force. But the edge $v_4v_1$ can force the edge $v_4v_0$ to black.}}\label{edgezero}
\end{center}
\end{figure}

The \emph{path cover number} $P(G)$ of $G$ is the minimum number of vertex disjoint paths, occurring as induced subgraphs of $G$, that cover all the vertices of $G$.

\begin{theorem} \cite{AIM, pathcover} \label{pathcover}
\begin{itemize}
\item[(a)] \cite{pathcover} For any graph $G$, $P(G) \le Z(G)$.
\item[(b)] \cite{AIM} For any tree $T$, $P(T) = Z(T)$.
\end{itemize}
\end{theorem}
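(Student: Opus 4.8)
The plan is to prove the two parts separately, deriving part (b) in part from part (a).

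For part (a), I would fix a minimum zero forcing set $S$ with $|S|=Z(G)$ and fix one valid run of the color-change rule, recording for each forced vertex the vertex that forced it. Two structural facts drive the argument: each vertex is forced by at most one vertex (it changes color exactly once, at the moment it is forced), and each vertex forces at most one vertex (immediately after a black vertex $u$ forces its unique white neighbor, $u$ has no white neighbor left and can never force again). Consequently the forcing relation decomposes $V(G)$ into vertex-disjoint forcing chains, and since a chain begins precisely at an unforced vertex, i.e. at a vertex of $S$, there are exactly $|S|=Z(G)$ chains and they cover $V(G)$. It then remains to check that each chain $u_1 \to u_2 \to \cdots \to u_t$ is an \emph{induced} path. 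I would argue by contradiction: if $u_au_c \in E(G)$ with $c \ge a+2$, then at the step when $u_a$ forces $u_{a+1}$ the vertex $u_c$ is still white, since it is forced strictly later, so $u_a$ would have had the two white neighbors $u_{a+1}$ and $u_c$, contradicting the color-change rule. Hence the chains form a cover of $G$ by $Z(G)$ vertex-disjoint induced paths, giving $P(G) \le Z(G)$.

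For part (b), the inequality $P(T) \le Z(T)$ is the special case of (a), so the work is to establish $Z(T) \le P(T)$. Here I would start from a minimum cover of the tree $T$ by induced paths $Q_1, \ldots, Q_k$ with $k=P(T)$, and produce a zero forcing set of size $k$ by selecting exactly one endpoint from each $Q_i$. To organize the selection I would pass to the quotient obtained by contracting each $Q_i$ to a single vertex; since $T$ is a tree and each $Q_i$ is connected, this quotient is again a tree $H$ on $k$ vertices, as two distinct edges of $T$ joining the same pair of contracted paths would create a cycle. I would then induct on $k$: choose a leaf path $Q$ of $H$, which meets the rest of $T$ through a single edge; remove $V(Q)$ to obtain a smaller tree $T'$ whose remaining $k-1$ paths form a minimum cover; apply the inductive hypothesis to color $T'$; and finally propagate the color across the single connecting edge into $Q$ and along $Q$ from the resulting colored junction together with the pre-selected endpoint.

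The main obstacle is exactly this last propagation step, and making it run in a \emph{consistent global order}. When color is pushed from $T'$ into $Q$, the junction vertex of $Q$ may be interior to $Q$, so one of its two path-neighbors must already be black, supplied by the chosen endpoint, before it can force in the other direction; moreover a single vertex of $T'$ may be the junction for several leaf paths at once, in which case it cannot force any of them until all but one of those white neighbors have been colored by other means. Reconciling these local constraints, namely choosing which endpoint of each path to blacken and sequencing the forces so that at every stage some black vertex genuinely has a unique white neighbor, is where the tree hypothesis is essential and where most of the care lies; the acyclicity of $T$ is ultimately what guarantees that a consistent ordering exists.
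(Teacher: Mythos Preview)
The paper does not prove this theorem; it is quoted from \cite{AIM} and \cite{pathcover} without argument, so there is no ``paper's proof'' to compare against. I will therefore assess your sketch on its own.

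Your argument for (a) is correct and is the standard one: a fixed run of the color-change rule partitions $V(G)$ into forcing chains, one per vertex of the initial black set, and each chain is an induced path for exactly the reason you give.

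For (b) your strategy --- contract the paths of a minimum cover to obtain an auxiliary tree $H$, peel off a leaf path $Q$, induct on $T' = T - V(Q)$, and add one endpoint of $Q$ --- is the right one and can be made to work, but your write-up has a gap in the inductive step. You propose to first colour all of $T'$ using the inductively obtained set $S'$, and only then ``propagate the colour across the single connecting edge into $Q$.'' The problem is that the attachment vertex $x \in V(T')$ has, in $T$, the extra neighbour $y \in V(Q)$, and if $y$ is still white then any force that $x$ was supposed to perform \emph{inside} $T'$ is blocked. So the $T'$-forcing need not run to completion in $T$ in the order you describe. Your final paragraph discusses the junction $y$ being interior to $Q$ and the possibility of several leaf paths sharing a junction vertex, but it does not address this particular obstruction.

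The fix is short: choose the endpoint of $Q$ so that $y$ becomes black \emph{before} the $T'$-forcing begins. If $y$ is an endpoint of $Q$, take $y$ itself as the selected endpoint; then $y$ is black from the start, $x$ is unobstructed, $T'$ colours completely, and afterwards $y$ forces down the rest of $Q$. If $y = v_j$ is interior in $Q = v_1 \cdots v_m$, take $v_1$ (say); since $Q$ is a leaf of $H$, none of $v_1, \ldots, v_{j-1}$ have neighbours outside $Q$, so $v_1 \to v_2 \to \cdots \to v_j$ runs unobstructed, making $y$ black; now run the $T'$-forcing (with $x$ unblocked), and finally $v_j \to v_{j+1} \to \cdots \to v_m$. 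With this refinement, and with the induction stated as ``for any tree $T$ and any path cover $\mathcal{P}$ of $T$ there is a zero forcing set of size $|\mathcal{P}|$ consisting of one endpoint per path,'' your argument goes through.
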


\begin{theorem} \cite{AIM}\label{ZlineT}
For any nontrivial tree $T$, $Z(L(T))=\sigma(T)-1$.
\end{theorem}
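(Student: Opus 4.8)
The plan is to pass to the \emph{edge} zero forcing number. By Observation~\ref{zf-edgezf} we have $Z(L(T))=Z_e(T)$, so it suffices to prove $Z_e(T)=\sigma(T)-1$. I would prove the two inequalities separately, and I expect the lower bound to be the crux.

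For the upper bound $Z_e(T)\le \sigma(T)-1$, fix a leaf $\ell_0$ with incident (pendant) edge $e_0$, and let $F$ be the set of all pendant edges except $e_0$, so that $|F|=\sigma(T)-1$. Root $T$ at $\ell_0$ and process the non-root vertices in post-order; I claim each vertex $v$ forces the edge $p(v)$ joining $v$ to its parent. The non-root leaves start this off, since their parent edges are exactly the pendant edges lying in $F$. For an internal $v$, once all of its child edges are black (they are the parent edges of the children, already forced) and, for one child $u$, all of $u$'s incident edges are black, the child edge $vu$ has $p(v)$ as its unique white neighbor in $L(T)$ and hence forces it; the only subtlety is to check that the post-order indeed guarantees that all edges around $u$ are already black. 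Running this up to the root forces $e_0$ last, so $F$ is an edge zero forcing set.

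For the lower bound $Z_e(T)\ge \sigma(T)-1$, run the forcing from a minimum edge zero forcing set $F$ and record the resulting forcing chains, which partition $E(T)$ into $z=|F|$ chains, each an induced path in $L(T)$ and hence, since $T$ is a tree, a simple path in $T$. Along such a path each vertex $v$ is used in $0$, $1$, or $2$ of its incident edges, the last case meaning the chain passes through $v$. Writing $a_v$ for the number of chains passing through $v$ and $b_v$ for the number of chains \emph{ending} at $v$, one has $\deg_T(v)=2a_v+b_v$ and $\sum_v b_v=2z$; summing the degree identity over all vertices and using $\sum_v \deg_T(v)=2(|V(T)|-1)$ gives $z=|V(T)|-1-\sum_v a_v$. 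Since a leaf has $a_v=0$, it remains to bound $\sum_v a_v$ by the number of internal vertices, for which it suffices to prove the key lemma that \emph{at most one forcing chain passes through any vertex} (that is, $a_v\le 1$).

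The key lemma is where the real work lies, and it is what pushes the bound up to $\sigma(T)-1$ rather than the much weaker $\lceil \sigma(T)/2\rceil$ that a pure path-partition count would yield. To prove it I would argue by the timing of the color changes. Suppose two distinct chains passed through $v$, using edge pairs $\{e_1,e_2\}$ and $\{e_3,e_4\}$ (four distinct edges at $v$), and let $t(e)$ denote the step at which $e$ turns black. In the first chain one edge forces the other, say $p_1$ forces $q_1$ with $\{p_1,q_1\}=\{e_1,e_2\}$; since $e_3$ and $e_4$ are adjacent to $p_1$ in $L(T)$, both must already be black, giving $t(e_3),t(e_4)<t(q_1)$. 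Symmetrically the second chain forces some $q_2\in\{e_3,e_4\}$ and forces $t(e_1),t(e_2)<t(q_2)$. But then $t(q_1)<t(q_2)$, because $q_1\in\{e_1,e_2\}$, while $t(q_2)<t(q_1)$, because $q_2\in\{e_3,e_4\}$, a contradiction. Hence $a_v\le 1$, so $\sum_v a_v\le |V(T)|-\sigma(T)$ and $z\ge \sigma(T)-1$. Combined with the upper bound this gives $Z(L(T))=Z_e(T)=\sigma(T)-1$. The main obstacle is thus isolating and proving this ``at most one through-chain'' lemma; everything else is bookkeeping with the handshake identity.
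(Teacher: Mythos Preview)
The paper does not prove this theorem; it is quoted from \cite{AIM} without argument, so there is no proof in the paper to compare against.

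Your argument is correct and self-contained. The upper bound construction (all pendant edges but one, with a post-order verification that the child edge $vu$ forces $p(v)$ once all child edges of $v$ and all edges at $u$ are black) is the natural one and works as you describe. For the lower bound, partitioning $E(T)$ into $z=|F|$ forcing chains, each a simple path in $T$, and deriving $z=|V(T)|-1-\sum_v a_v$ from $\deg_T(v)=2a_v+b_v$ and $\sum_v b_v=2z$ is a clean count; your timing argument for the key lemma $a_v\le 1$ (if two chains passed through $v$, the forced edge in each would have to precede the forced edge in the other) is valid.

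Two small remarks. First, Observation~\ref{zf-edgezf} as stated in the paper only asserts that an edge zero forcing set of $G$ yields a zero forcing set of $L(G)$; you use the equality $Z(L(T))=Z_e(T)$, but the converse direction is equally immediate from the bijection between $E(G)$ and $V(L(G))$, so this is harmless. Second, the clause ``since $T$ is a tree'' when passing from an induced path in $L(T)$ to a simple path in $T$ is not actually needed: the induced-path condition already rules out repeated vertices in the corresponding walk for any simple graph.
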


\begin{prop} \cite{dimZ, Row} \label{ObsZ} Let $G$ be a connected graph of order $n \ge 2$. Then
\begin{itemize}
\item[(a)] $Z(G)=1$ if and only if $G=P_n$,
\item[(b)] $Z(G)=n-1$ if and only if $G=K_n$.
\end{itemize}
\end{prop}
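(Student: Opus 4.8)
The plan is to prove the two biconditionals separately, in each case disposing of the easy ``sufficiency'' direction by an explicit forcing and reserving the real work for the ``necessity'' direction.

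For part (a), the direction $G=P_n \Rightarrow Z(G)=1$ is immediate: color one end-vertex black, and the single forcing chain runs down the path, so $Z(P_n)\le 1$; since $n\ge 2$ gives $Z(G)\ge 1$, equality holds. For the converse I would invoke the path cover number. By Theorem~\ref{pathcover}(a), $P(G)\le Z(G)=1$, and $P(G)\ge 1$ trivially, so $P(G)=1$. But $P(G)=1$ means a single \emph{induced} path covers all of $V(G)$; since the induced subgraph on $V(G)$ is $G$ itself, this forces $G=P_n$. (Alternatively one argues directly: a single black vertex can force only if it has degree one, and at each subsequent step the newly blackened vertex must again have exactly one white neighbor, so every vertex has degree at most two with the two ends of degree one, i.e.\ $G$ is a path.)

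For part (b), the direction $G=K_n \Rightarrow Z(G)=n-1$ is exactly Lemma~\ref{Zcompletesub} (equivalently: color all but one vertex black; the lone white vertex is the unique white neighbor of every black vertex and is forced, while any $n-2$ black vertices leave two mutually-adjacent white vertices that can never fire). The substance is the converse, which I would prove in contrapositive form: if $G$ is connected but $G\neq K_n$, then $Z(G)\le n-2$. Since $G$ is connected and not complete, $diam(G)\ge 2$, so there are vertices $a,b$ at distance two with a common neighbor $c$; thus $a-c-b$ is an induced $P_3$ with $a\not\sim b$. I claim $V(G)\setminus\{a,c\}$, a set of size $n-2$, is a zero forcing set: the vertex $b$ is black, is adjacent to $c$ but not to $a$, so its unique white neighbor is $c$, giving $b\to c$; afterwards $a$ is the only white vertex and is forced by any of its (now black) neighbors. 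Hence $Z(G)\le n-2<n-1$.

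The main obstacle is isolating the correct construction for the converse of (b): the naive choice of leaving two non-adjacent vertices white can stall (in $C_4$, a non-adjacent ``twin'' pair left white never fires). The key point that removes this obstacle is to leave white not a non-adjacent pair but one endpoint and the center of an induced $P_3$, so that the opposite endpoint $b$ already has a unique white neighbor and can perform the first force, after which the remainder is automatic. I would also dispatch the degenerate case $n=2$ separately (the only connected graph is $K_2=P_2$, consistent with both parts) before invoking the diameter argument.
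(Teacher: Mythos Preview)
Your argument is correct in both parts. Note, however, that the paper does not supply its own proof of this proposition: it is simply quoted from \cite{dimZ, Row} and used later (e.g., in Proposition~\ref{open_zero_line}). So there is no ``paper's proof'' to compare against; your write-up stands on its own as a clean, self-contained verification.

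A couple of minor remarks on presentation. In part~(a), either of your two routes suffices; the path-cover argument via Theorem~\ref{pathcover}(a) is slick but depends on an external result, whereas the direct degree argument is entirely elementary and perhaps preferable in a short proposition. In part~(b), your observation that the naive choice (leaving two non-adjacent vertices white) can stall on $C_4$ is exactly the right diagnostic, and your fix---leaving white the pair $\{a,c\}$ from an induced $P_3$ $a\!-\!c\!-\!b$ so that $b$ can fire first---is the standard and correct construction. The case $n=2$ is indeed degenerate (only $K_2=P_2$ occurs) and worth a one-line mention, as you do.
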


\begin{theorem} \cite{Z+e} \label{Zedgeremoval}
Let $G$ be any graph. Then
\begin{itemize}
\item[(a)] For $v \in V(G)$, $Z(G)-1 \le Z(G-v) \le Z(G)+1$.
\item[(b)] For $e \in E(G)$, $Z(G)-1 \le Z(G-e) \le Z(G)+1$.
\end{itemize}
\end{theorem}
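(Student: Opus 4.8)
The plan is to reduce everything to a single principle about a fixed \emph{chronological list of forces}: given a zero forcing set $S$ and a valid sequence of applications of the color-change rule that blackens the whole graph, each vertex forces at most once and is forced at most once. Two monotonicity facts then do all the work. First, deleting a vertex or an edge only \emph{removes} vertices from neighborhoods, so any force $a\to b$ whose target $b$ is still present and still adjacent to $a$ remains valid after the deletion, since its precondition ``all neighbors of $a$ except $b$ are black'' can only get easier. Second, enlarging the initial black set never invalidates a force, because a force is obstructed only by a \emph{white} neighbor. I would record these two observations first and use them repeatedly.

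For part (b), I first prove $Z(G-e)\le Z(G)+1$ for $e=uv$. Fix a minimum zero forcing set $S$ of $G$ and a chronological list of its forces. At most one force uses the edge $e$, namely $u\to v$ or $v\to u$. If none does, then by the first monotonicity fact $S$ already forces $G-e$. If, say, $u\to v$ is used, I replace $S$ by $S\cup\{v\}$: the single lost force $u\to v$ is no longer needed because $v$ starts black, while every other force survives the deletion of $e$; hence $S\cup\{v\}$ forces $G-e$ and $Z(G-e)\le Z(G)+1$. For the reverse inequality I prove the ``edge-addition'' bound $Z(G+e)\le Z(G)+1$ and apply it to $G-e$ (adding $e$ back) to obtain $Z(G)\le Z(G-e)+1$. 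Combining yields $Z(G)-1\le Z(G-e)\le Z(G)+1$.

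The edge-addition bound is where the real content lies, so I would treat it as the \textbf{main obstacle}. Take a minimum zero forcing set $S$ of $G$ with a chronological list of forces, and add $e=uv$. Passing from $G$ to $G+e$ enlarges only the neighborhoods of $u$ and $v$ (each gains the other), so the \emph{only} forces that can be invalidated are the unique force out of $u$, say $u\to z_u$, and the unique force out of $v$, say $v\to z_v$. The crux is the claim that \emph{at most one of these two can break}: ``$u\to z_u$ breaks'' means $v$ is still white when $u$ forces, and ``$v\to z_v$ breaks'' means $u$ is still white when $v$ forces; but a vertex must be black in order to force, so each of $u,v$ stays black from the moment it forces onward, and the two whiteness conditions force each forcing time to precede the other, a contradiction. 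Granting the claim, if $u\to z_u$ is the (at most one) broken force I add $v$ to $S$: now $v$ is black throughout, so the neighbor of $u$ newly created by $e$ is black and $u\to z_u$ is restored, while every other force is untouched. Thus $S\cup\{v\}$ forces $G+e$ and $Z(G+e)\le Z(G)+1$.

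For part (a) I argue directly, again by replaying a chronological list. For $Z(G-v)\le Z(G)+1$, start from a minimum zero forcing set $S$ of $G$; deleting $v$ only shrinks neighborhoods, so the single force $v\to z$ out of $v$ (if any) is the only one lost, and $(S\setminus\{v\})\cup\{z\}$, which has size at most $Z(G)+1$, forces $G-v$. For $Z(G-v)\ge Z(G)-1$, take a minimum zero forcing set $T$ of $G-v$ and check that $T\cup\{v\}$ forces $G$: every force valid in $G-v$ stays valid in $G$, because the only new neighbor any vertex can acquire is $v$, which is black from the start and hence never obstructs a force. This gives $Z(G)\le Z(G-v)+1$, completing the two-sided bound.
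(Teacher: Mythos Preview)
The paper does not prove this theorem; it is quoted verbatim from \cite{Z+e} (Edholm, Hogben, Huynh, LaGrange, and Row) and used only as a tool, so there is no in-paper argument to compare against.

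Your proof is correct and is essentially the standard argument. The two monotonicity principles you isolate---that deleting a vertex or edge only shrinks neighborhoods (so any surviving force remains valid), and that enlarging the initial black set never obstructs a force---are exactly what is needed, and the chronological-list replay makes the bookkeeping transparent. The only step with real content is the edge-addition bound $Z(G+e)\le Z(G)+1$, and you handle it correctly: since a vertex must already be black at the moment it forces, the two ``break'' conditions (that $v$ is white at time $t_u$ and $u$ is white at time $t_v$) force $t_u<t_v$ and $t_v<t_u$ simultaneously, so at most one of $u\to z_u$ and $v\to z_v$ can fail, and adding the single offending endpoint repairs the list. One small point worth making explicit in part~(a): besides the outgoing force $v\to z$, there may also be an incoming force $a\to v$ in the list; this is simply discarded in $G-v$ and causes no trouble since $v$ need not be blackened there. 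With that remark, the argument is complete.
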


Next, we compare $Z(G)$ and $Z(L(G))$.

\begin{theorem} \label{ZLtree2}
For any connected graph $G$, $Z(G) \leq 2 Z(L(G))$.
\end{theorem}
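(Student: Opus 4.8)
The plan is to route everything through the edge zero forcing number $Z_e(G)$ rather than confronting $Z(L(G))$ directly. By Observation~\ref{zf-edgezf}, the vertices of $L(G)$ are exactly the edges of $G$, and the (vertex) color-change rule on $L(G)$ is precisely the edge color-change rule on $G$; this correspondence is a bijection preserving the forcing dynamics, so $Z(L(G)) = Z_e(G)$. It therefore suffices to produce a (vertex) zero forcing set $S$ of $G$ with $|S| \le 2 Z_e(G)$. The natural candidate is built from a minimum edge zero forcing set $F$ (so $|F| = Z_e(G)$) by taking both endpoints of every edge in $F$: put $S = \bigcup_{e \in F} V(e)$, where $V(e)$ denotes the two endpoints of $e$. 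Since each edge contributes at most two vertices, $|S| \le 2|F| = 2 Z_e(G)$.

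The main work is to show $S$ is a vertex zero forcing set, which I plan to do by running the vertex forcing process in lockstep with the edge forcing process determined by $F$, maintaining the \emph{invariant}: whenever an edge $e$ is black in the edge process, both endpoints of $e$ are black vertices in the vertex process. This invariant holds at the outset by the very construction of $S$.

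For the inductive step, suppose a black edge $e_1 = u_1 v$ forces a white edge $e_2 = v u_2$, so that $e_2$ is the only white edge adjacent to $e_1$. Every edge incident with $v$ other than $e_1$ is adjacent to $e_1$, so all edges at $v$ except $e_2$ are black; that is, $e_2$ is the unique white edge incident with $v$. By the invariant, $v$ is black (it is an endpoint of the black edge $e_1$), and every neighbor of $v$ joined to it by a black edge is already a black vertex. Hence the only neighbor of $v$ that can still be white is $u_2$. Consequently either $u_2$ is already black, or $v$ forces $u_2$ in the vertex process; in both cases $u_2$ becomes black and the invariant is restored for the newly-blackened edge $e_2$.

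Because $F$ is an edge zero forcing set, iterating this argument shows every edge of $G$ eventually turns black; since $G$ is connected with $|V(G)| \ge 2$, each vertex is an endpoint of some edge and is therefore black by the invariant. Thus $S$ forces all of $G$, giving $Z(G) \le |S| \le 2 Z(L(G))$. I expect the only delicate point to be the verification that, at each forcing step, $v$ has a \emph{unique} white neighbor; this is exactly where the invariant combines with the precise meaning of edge-adjacency at $v$ to do the real work, while the remainder is bookkeeping on the forcing order.
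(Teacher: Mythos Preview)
Your proof is correct and follows essentially the same approach as the paper's: take a minimum edge zero forcing set, let $S$ consist of both endpoints of each of its edges, and maintain the invariant that every black edge has both endpoints black in the vertex process, so that each edge force $e_1 \to e_2$ at the shared vertex yields (or is subsumed by) a legal vertex force. The only cosmetic difference is that you state $Z(L(G))=Z_e(G)$ explicitly via Observation~\ref{zf-edgezf}, whereas the paper uses this identification tacitly.
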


\begin{proof}
Let $Z$ be a minimum edge zero-forcing set for $G$. Form a set $Z'$ in $V(G)$ by taking both endpoints of each edge which appears in $Z$. Notice that $|Z'| \leq 2|Z| = 2 Z(L(G))$. We claim that $Z'$ is a zero-forcing set for $G$.\\

Notice that if an edge is black, then both of its endpoints are black in $G$, but the converse is not necessarily true. In the first iteration (i.e., one global application of the color-change rule) of edge zero-forcing, the black edges in $Z$ force other edges to become black. Each edge which is forced to become black is adjacent to a black edge, so at least one endpoint of each edge that is forced black was already black in $G$. If both endpoints were black in $G$, then nothing changes in $G$. Suppose one endpoint, say $u$, was black, and the other endpoint, say $v$, was white. Then $uv$ is forced by a neighboring black edge to become black. Without loss of generality, the neighboring black edge was $xu$ for some $x \in V(G)$. Every edge other than $uv$ adjacent to $xu$ must have been black, so all of their endpoints were black in $G$. In particular, every neighbor of $u$ other than $v$ was already black, so $v$ is forced to turn black in $G$ in this iteration. Thus, we maintain the property that if an edge is black, then both of its endpoints are black after the same number of iterations.\\

Since eventually every edge is black, and $G$ is a connected nontrivial graph, eventually every vertex of $G$ is black. Thus, $Z'$ is a zero-forcing set for $G$.~\hfill
\end{proof}

Next, we give an example of a tree $T$ satisfying $Z(T) \neq Z(L(T))$, which is worth mentioning, since $\dim(T)=\dim(L(T))$ for any tree $T$ given Theorem \ref{lineT} and the fact $L(P_n)=P_{n-1}$.

\begin{rem}
The tree $T$ of Figure \ref{treeDZ} satisfies $Z(T)=P(T)=7$, and one can easily check that $Z(L(T))=11$; here $T$ of Figure \ref{treeDZ} can be viewed as a tree obtained by attaching $4$ copies of a subtree on 6 vertices at the central vertex. If $T'$ is a tree obtained by attaching $k \ge 3$ branches of the subtree at the central vertex, one can easily check that $Z(T')=P(T')=2k-1$ and $Z(L(T'))=3k-1$.
\end{rem}

\begin{figure}[htbp]
\begin{center}
\scalebox{0.37}{\input{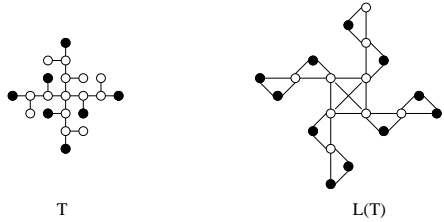}} \caption{\small{A tree $T$ satisfying $Z(T) \neq Z(L(T))$, where the solid vertices form a minimum zero forcing set for $T$ and $L(T)$, respectively.}}\label{treeDZ}
\end{center}
\end{figure}

\begin{theorem} \label{ZLtree1}
For any nontrivial tree $T$, $Z(T) \le Z(L(T))$.
\end{theorem}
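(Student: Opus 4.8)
The plan is to reduce the inequality to a purely combinatorial statement about the path cover number, using the two tree-specific results already in hand. By Theorem~\ref{pathcover}(b) we have $Z(T)=P(T)$, and by Theorem~\ref{ZlineT} we have $Z(L(T))=\sigma(T)-1$. Hence the theorem is equivalent to the claim that $P(T)\le \sigma(T)-1$ for every nontrivial tree $T$; this single inequality is the entire content of the argument, and everything else is bookkeeping with the two cited theorems.

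I would establish $P(T)\le\sigma(T)-1$ by induction on the number of end-vertices $\sigma(T)$. For the base case, if $\sigma(T)=2$ then $T$ is a path, so $P(T)=1=\sigma(T)-1$. For the inductive step, assume $\sigma(T)=m\ge 3$, so that $T$ has at least one vertex of degree at least $3$. Fix an end-vertex $u$ and walk away from $u$ along vertices of degree at most $2$ until reaching the first vertex $b$ with $\deg_T(b)\ge 3$; such a $b$ exists precisely because $T$ is not a path. Let $Q$ be the resulting pendant path running from $u$ up to but not including $b$, and set $T'=T-V(Q)$.

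I would then verify two facts. First, $T'$ is again a nontrivial tree with $\sigma(T')=\sigma(T)-1$: deleting $V(Q)$ removes the single end-vertex $u$, while $b$ loses exactly one neighbor and, since $\deg_T(b)\ge 3$, retains degree at least $2$ in $T'$, so no new end-vertex is created; every other vertex keeps its $T$-degree, and every leaf of $T$ other than $u$ survives in $T'$ as a leaf. Second, $P(T)\le P(T')+1$: any minimum family of vertex-disjoint induced paths covering $T'$, together with the induced path $Q$ (whose vertex set is disjoint from $V(T')$), covers all of $T$. Combining these with the inductive hypothesis $P(T')\le\sigma(T')-1$ yields $P(T)\le P(T')+1\le \sigma(T')-1+1=\sigma(T)-1$, completing the induction.

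Assembling the pieces gives $Z(T)=P(T)\le\sigma(T)-1=Z(L(T))$. The only delicate point is the inductive claim on $P(T)$, and within it the step demanding the most care is confirming that deleting a maximal pendant path lowers the leaf count by \emph{exactly} one and does not spawn a new leaf at the branch vertex $b$ — which is exactly where the hypothesis $\deg_T(b)\ge 3$ is essential. One should also check the boundary behavior of the reduction (for instance, when the neighbor of $u$ is already the branch vertex $b$, so that $Q=\{u\}$), but these cases are handled uniformly by the same degree count.
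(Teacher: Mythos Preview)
Your proof is correct and follows essentially the same approach as the paper: both reduce the statement, via Theorem~\ref{pathcover}(b) and Theorem~\ref{ZlineT}, to the inequality $P(T)\le\sigma(T)-1$, and then prove that inequality by induction on the tree. The only difference is cosmetic: the paper builds $T$ up from $P_2$ by attaching one pendant edge at a time (splitting into the cases $\sigma(T)=\sigma(T')$ and $\sigma(T)=\sigma(T')+1$), whereas you tear $T$ down by deleting an entire maximal pendant path in one step, inducting directly on $\sigma(T)$.
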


\begin{proof}
Let $T$ be a tree of order $n \ge 2$. By Theorem \ref{ZlineT} and (b) of Theorem \ref{pathcover}, it suffices to show
\begin{equation}\label{eqtree}
P(T)\leq \sigma(T)-1 .
\end{equation}
First, notice that (\ref{eqtree}) holds for $T=P_2$. Notice also that every tree $T$ may be obtained from $P_2$ by attaching finitely many pendant edges. Let $T=T'+e$ denote the vertex sum of $T'$ and a disjoint copy of $P_2$, and assume (\ref{eqtree}) holds for $T'$. If $\sigma(T)=\sigma(T')$, then $e=uv$ is ``attached" to an end-vertex $v'$ of $T'$; let's say $v$ is identified with $v'$. Take any path cover $C'$ of $T'$ with $|C'|=P(T') \leq \sigma(T')-1$. Since $v'$ is an end-vertex in $T'$, it must be either the first or the last vertex in a path $Q' \in C'$. Let $Q=Q'+e$, where the vertex sum is formed by identifying $v'$ in $T'$ with $v$ of $e$. Then $C=(C' - \{Q'\}) \cup \{Q\}$ is a pathcover for $T$; hence (\ref{eqtree}) holds for $T$. If $\sigma(T)=\sigma(T')+1$, then $C=C' \cup \{u\}$ suffices as a path cover for $T$ showing that (\ref{eqtree}) holds for $T$.\hfill
\end{proof}

Next, we recall a result which is useful in establishing the lower bound for the zero forcing number of some line graphs.

\begin{prop}\cite{AIM} \label{wheelAIM}
Let $G$ be a graph of order $n \ge 2$. If $G$ contains a Hamiltonian path, then $mr(L(G))=n-2$.
\end{prop}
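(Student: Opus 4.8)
The plan is to establish the two inequalities $mr(L(G))\ge n-2$ and $mr(L(G))\le n-2$ separately; throughout write $m=|E(G)|$, so $L(G)$ has $m$ vertices and $mr(L(G))=m-M(L(G))$, and let $S(L(G))$ denote the symmetric matrices whose off-diagonal zero--nonzero pattern is that of $L(G)$. The single structural fact driving both bounds is that the $n-1$ edges of a Hamiltonian path $v_1v_2\cdots v_n$ induce a copy of $P_{n-1}$ in $L(G)$: since the $v_k$ are distinct, two path-edges $v_kv_{k+1}$ and $v_\ell v_{\ell+1}$ are adjacent in $L(G)$ exactly when $|k-\ell|=1$, so this vertex subset of $L(G)$ carries precisely the path pattern with no extra adjacencies.

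For the lower bound I would invoke monotonicity of minimum rank under induced subgraphs. If $A\in S(L(G))$ is optimal and $W$ indexes the $n-1$ path-edges, then the principal submatrix $A[W]$ lies in $S(P_{n-1})$, while a principal submatrix never exceeds the rank of the whole matrix; hence $n-2=mr(P_{n-1})\le \operatorname{rank}(A[W])\le \operatorname{rank}(A)=mr(L(G))$. This half is clean and self-contained, using only $mr(P_{n-1})=(n-1)-1$.

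For the upper bound I would induct on the number of chords of $G$ (edges outside the fixed Hamiltonian path). In the base case $G=P_n$ one has $L(P_n)=P_{n-1}$, and a tridiagonal matrix with nonzero off-diagonal entries and a suitably chosen diagonal realizes $mr(P_{n-1})=n-2$. For the step, deleting a chord $e^{\ast}=\{v_i,v_j\}$ yields $G'$, which still contains the same Hamiltonian path, so by induction (and by the lower bound applied to $G'$) there is a realization $A'\in S(L(G'))$ of rank exactly $n-2$; writing $A'_{ef}=\phi(e)^{\top}J\phi(f)$ with $\phi(e)\in\mathbb{R}^{n-2}$ and $J$ a fixed signature, the vectors $\{\phi(e)\}$ span $\mathbb{R}^{n-2}$. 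Now $L(G)$ is $L(G')$ with one new vertex $e^{\ast}$ whose neighborhood is the union of the two cliques $Q_i,Q_j$ of edges meeting $v_i,v_j$. Because the $\phi(e)$ already span, any assigned $\phi(e^{\ast})\in\mathbb{R}^{n-2}$ keeps the rank equal to $n-2$; the entire content of the step is to choose $\phi(e^{\ast})$ so that $\phi(f)^{\top}J\phi(e^{\ast})\ne 0$ for $f\in Q_i\cup Q_j$ and $=0$ for every edge $f$ disjoint from $e^{\ast}$. Since $J$ is invertible, such a choice exists precisely when, for each neighbor $f$, one has $\phi(f)\notin \operatorname{span}\{\phi(g):g\cap e^{\ast}=\emptyset\}$; granting this, a generic vector in $\bigcap_{g\text{ disjoint}}\ker\!\big(\phi(g)^{\top}J\,\cdot\big)$ meets all the nonvanishing requirements simultaneously.

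The main obstacle is exactly this last independence condition: an arbitrary optimal realization of $L(G')$ need not place $\phi(f)$ outside the span of the disjoint edges, so the induction should be carried with a strengthened hypothesis asserting that a rank-$(n-2)$ realization can be chosen in sufficiently general position, and the real work is to show that general position survives when $e^{\ast}$ is adjoined. An alternative, more computational route avoids the induction by building one matrix directly from the Hamiltonian order: assign distinct parameters $t_1<\cdots<t_n$ to the vertices and seek an indefinite form $J$ for which the Gram matrix of the edge-vectors $\{\varepsilon_i+\varepsilon_j\}$ has the line-graph pattern and rank $n-2$; there the difficulty migrates to forcing the inner product of every pair of disjoint edges to vanish while keeping adjacent pairs nonzero, which is what pins down the off-diagonal structure of $J$ in terms of the path. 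Either way, the lower bound is immediate, and the rank-non-increase under chord addition is the crux.
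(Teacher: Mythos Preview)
The paper does not prove this proposition at all: it is quoted from \cite{AIM} and used as a black box, so there is no ``paper's own proof'' to compare against. I therefore assess your proposal on its own merits.

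Your lower bound is correct and standard. The $n-1$ edges of the Hamiltonian path do induce a copy of $P_{n-1}$ in $L(G)$ (two path-edges share a vertex exactly when they are consecutive, and there are no extra adjacencies among them because the $v_k$ are distinct), minimum rank is monotone under induced subgraphs, and $mr(P_{n-1})=n-2$. Nothing more is needed here.

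The upper bound, however, has a genuine gap that you yourself flag but do not close. In the inductive step you must choose $\phi(e^\ast)$ in the $J$-orthogonal complement of $\{\phi(g):g\cap e^\ast=\emptyset\}$ while keeping $\phi(f)^{\top}J\phi(e^\ast)\ne 0$ for every $f\in Q_i\cup Q_j$. This is possible precisely when no $\phi(f)$ with $f$ adjacent to $e^\ast$ lies in the $J$-span of the $\phi(g)$ with $g$ disjoint from $e^\ast$; but nothing in your inductive hypothesis guarantees this, and it can certainly fail for particular rank-$(n-2)$ realizations of $L(G')$. Saying that ``the induction should be carried with a strengthened hypothesis asserting general position'' is not a proof: you have to formulate the strengthened statement precisely and then verify that when you adjoin $e^\ast$ you can again realize $L(G)$ in general position, not merely at rank $n-2$. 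That second verification is the hard part and is entirely absent. Your alternative ``computational route'' is likewise only a sketch of where the difficulty moves, not a resolution.

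In short: the lower bound is done; the upper bound is not. If you want to complete it, either (i) make the strengthened inductive hypothesis explicit (for example, that one may choose the $\phi(e)$ so that for every pair $i<j$ the set $\{\phi(f):f\ \text{incident to}\ v_i\ \text{or}\ v_j\}$ is not contained in the $J$-span of $\{\phi(g):g\ \text{disjoint from}\ \{v_i,v_j\}\}$) and prove it is preserved, or (ii) give a single explicit rank-$(n-2)$ matrix in $S(L(G))$ built directly from the Hamiltonian order, as the original source does.
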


\begin{prop} \label{open_zero_line}
Let a connected graph $G$ either have order $n \leq 4$ or contain a Hamiltonian path and satisfies $|E(G)| \ge 2(n-2)$; then $Z(G) \le Z(L(G))$.
\end{prop}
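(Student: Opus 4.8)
The plan is to treat the two sufficient conditions separately: the case ``$G$ contains a Hamiltonian path and $|E(G)| \ge 2(n-2)$'' is the substantive one, and the case ``$n \le 4$'' will be reduced to it (together with the tree case, already settled by Theorem~\ref{ZLtree1}). Throughout, write $n = |V(G)|$ and recall that $|V(L(G))| = |E(G)|$.

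First I would dispose of the Hamiltonian case. Proposition~\ref{wheelAIM} gives $mr(L(G)) = n-2$, so applying the identity $mr(H) + M(H) = |V(H)|$ to $H = L(G)$ yields $M(L(G)) = |E(G)| - (n-2)$. Combining this with the bound $M(L(G)) \le Z(L(G))$ produces the central estimate $Z(L(G)) \ge |E(G)| - (n-2)$. It therefore suffices to show $Z(G) \le |E(G)| - (n-2)$.

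The step where care is needed, and the main obstacle, is reconciling this against $Z(G)$; here the argument splits according to whether $G$ is complete. Using the elementary bound $Z(G) \le n-1$ (coloring all but one vertex black forces the last, since it is the unique white neighbor of one of its black neighbors) together with Proposition~\ref{ObsZ}(b), every connected $G \ne K_n$ satisfies $Z(G) \le n-2$; then $|E(G)| \ge 2(n-2)$ gives $|E(G)| - (n-2) \ge n-2 \ge Z(G)$, as required. When $G = K_n$ the bound $Z(G) \le n-2$ fails (indeed $Z(K_n) = n-1$), but now $|E(G)| = \binom{n}{2}$ is large: $|E(G)| - (n-2) = \tfrac12(n^2-3n+4)$, which equals $Z(L(K_n))$ by Proposition~\ref{line_complete}, and $\tfrac12(n^2-3n+4) - (n-1) = \tfrac12(n-2)(n-3) \ge 0$ for all $n$, so again $Z(G) \le Z(L(G))$. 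This ``off-by-one'' reconciliation---the hypothesis $|E(G)| \ge 2(n-2)$ being exactly enough for non-complete graphs, while $K_n$ is accommodated by its abundance of edges---is the crux of the proof.

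Finally, for the case $n \le 4$: if $G$ is a tree the inequality is immediate from Theorem~\ref{ZLtree1}. Otherwise $G$ contains a cycle, so $|E(G)| \ge n$, and since $n \ge 2(n-2)$ precisely when $n \le 4$, the edge condition $|E(G)| \ge 2(n-2)$ holds automatically. A quick inspection of the finitely many connected non-tree graphs of order at most four---namely $C_3$ for $n=3$, and $C_4$, the paw, $K_4 - e$, and $K_4$ for $n=4$---shows that each contains a Hamiltonian path, so the Hamiltonian case above applies and finishes the proof. Apart from the edge-count bookkeeping and the complete-graph case described above, every step is routine.
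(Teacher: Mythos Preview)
Your proof is correct and follows essentially the same approach as the paper: both use Proposition~\ref{wheelAIM} to obtain $Z(L(G)) \ge |E(G)|-(n-2)$, split off $K_n$ via Proposition~\ref{line_complete}, and for non-complete $G$ combine $Z(G)\le n-2$ with the edge hypothesis. The only difference is cosmetic: the paper dispatches $n\le 4$ by a direct case check, whereas you reduce those small cases back to the Hamiltonian argument (trees via Theorem~\ref{ZLtree1}, non-trees by observing they all have Hamiltonian paths and enough edges); your organization is slightly more economical but not substantively different.
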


\begin{proof}
If $2 \le n \le 4$, for any connected graph $G$ (not necessarily containing a Hamiltonian path), one can check all cases to see that $Z(G) \le Z(L(G))$. So, let $G$ have order $n\geq 5$ and contain a Hamiltonian path.  Notice that $Z(G) \le n-1$ by connectedness of $G$; note also that $Z(G)=n-1$ if and only if $G=K_n$ by (b) of Proposition \ref{ObsZ}. Since $Z(K_n) \le Z(L(K_n))$ by Proposition \ref{line_complete}, it suffices to consider $Z(G)\leq n-2$. By Proposition \ref{wheelAIM}, $Z(L(G)) \ge M(L(G)) \ge |V(L(G))|-(n-2)=|E(G)|-(n-2)$. The assertion $Z(G) \le Z(L(G))$ is satisfied by requiring that $|E(G)|-(n-2) \ge n-2$, which is equivalent to the hypothesis $|E(G)| \ge 2(n-2)$.~\hfill
\end{proof}


\subsection{Wheel Graphs}

We first determine the zero forcing number of the wheel graph $W_{1,n}$ for $n \ge 3$.

\begin{prop}
For $n \ge 3$, $Z(W_{1,n})=3$.
\end{prop}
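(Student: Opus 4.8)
The plan is to prove $Z(W_{1,n}) = 3$ by establishing the lower bound $Z(W_{1,n}) \ge 3$ and the upper bound $Z(W_{1,n}) \le 3$ separately. For the lower bound, I would invoke Theorem~\ref{mindegree}: since every rim vertex of $W_{1,n}$ is adjacent to its two cycle-neighbors and to the hub, we have $\delta(W_{1,n}) = 3$ for $n \ge 3$ (the hub has degree $n \ge 3$ as well), and hence $Z(W_{1,n}) \ge \delta(W_{1,n}) = 3$. This disposes of one direction immediately.

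For the upper bound, I would exhibit an explicit zero forcing set of size $3$ and verify that it forces the whole graph. Writing the hub as $v$ and the rim cycle as $u_0, u_1, \ldots, u_{n-1}$ (cyclically), a natural candidate is $S = \{v, u_0, u_1\}$, or possibly three consecutive rim vertices $\{u_0, u_1, u_2\}$; I would test the simplest one first. The key observation driving the forcing is that once two adjacent rim vertices and the hub are black, the color-change rule can propagate around the rim: a black rim vertex $u_i$ whose only white neighbor is $u_{i+1}$ forces $u_{i+1}$, since the hub $v$ (already black) and the other neighbor $u_{i-1}$ are black. I would walk through the first few forcing steps to confirm the chain $u_1 \rightarrow u_2 \rightarrow u_3 \rightarrow \cdots$ sweeps the entire rim, giving $Z(W_{1,n}) \le 3$.

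The main obstacle, and the step requiring the most care, is verifying that the candidate set actually forces and choosing it so that the forcing gets started and does not stall. The subtlety is the color-change rule's requirement of a \emph{unique} white neighbor: with the hub black, a rim vertex $u_i$ has neighbors $u_{i-1}$, $u_{i+1}$, and $v$, so $u_i$ can force only when exactly one of $u_{i-1}, u_{i+1}$ is white. This is why one needs an initial ``seed'' of two adjacent rim vertices to break symmetry and launch a one-directional chain; a single black rim vertex with two white rim-neighbors cannot force anything while the hub is also surrounded by white vertices. I would also note the small-$n$ edge cases ($n = 3$, giving $W_{1,3} = K_4$, where $Z(K_4) = 3$ by Proposition~\ref{ObsZ}) to confirm consistency, and check that the hub $v$ never needs to do any forcing itself once all rim vertices are black. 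Combining the two bounds yields $Z(W_{1,n}) = 3$.
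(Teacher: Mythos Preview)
Your proposal is correct and essentially identical to the paper's argument: both use Theorem~\ref{mindegree} with $\delta(W_{1,n})=3$ for the lower bound, and both exhibit the hub together with two adjacent rim vertices as a zero forcing set for the upper bound. The only cosmetic difference is that the paper describes the forcing as proceeding from both rim vertices simultaneously toward the middle, whereas you run a single chain around the rim; either description verifies that the same set $\{v,u_0,u_1\}$ works.
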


\begin{proof}
Since $\delta(W_{1,n})=3$, $Z(W_{1,n}) \ge 3$ by Theorem \ref{mindegree}. It is easy to see that $\{v, w_1, w_n\}$ forms a zero forcing set of $W_{1,n}$ (see (A) of Figure \ref{lemmawheel}): $w_1 \rightarrow w_2 \rightarrow \ldots \rightarrow w_{\lfloor \frac{n+1}{2}\rfloor}$ and $w_n \rightarrow w_{n-1} \rightarrow \ldots \rightarrow w_{\lceil \frac{n+1}{2}\rceil}$. So, $Z(W_{1,n}) \le 3$. Therefore, $Z(W_{1,n})=3$ for $n \ge 3$. \hfill
\end{proof}

\begin{theorem}\label{zerolinewheel}
For $n \ge 3$, $Z(L(W_{1,n}))= n+1$.
\end{theorem}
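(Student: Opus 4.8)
The plan is to prove the two inequalities $Z(L(W_{1,n})) \ge n+1$ and $Z(L(W_{1,n})) \le n+1$ separately. Throughout I use the labeling of (B) of Figure~\ref{lemmawheel}: the $n$ spoke-edges of $W_{1,n}$ become vertices $\ell_0, \ldots, \ell_{n-1}$ inducing a clique $K_n$ in $L(W_{1,n})$ (they all meet at the hub $v$), while the $n$ rim-edges become vertices $u_0, \ldots, u_{n-1}$ with $u_i$ adjacent to exactly $\ell_{i-1}, \ell_i$ and to its two cyclic rim-neighbors $u_{i-1}, u_{i+1}$ (indices mod $n$). In particular each spoke-vertex $\ell_i$ is adjacent to every other $\ell_j$ and to precisely the two rim-vertices $u_i, u_{i+1}$, and $|V(L(W_{1,n}))| = |E(W_{1,n})| = 2n$.

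For the lower bound I would invoke the minimum-rank machinery. The wheel $W_{1,n}$ contains the Hamiltonian path $v\, w_1\, w_2 \cdots w_n$, so Proposition~\ref{wheelAIM} applies with $G = W_{1,n}$ of order $n+1$ and gives $mr(L(W_{1,n})) = (n+1) - 2 = n-1$. Using $mr(H) + M(H) = |V(H)|$ for $H = L(W_{1,n})$ together with $|V(L(W_{1,n}))| = 2n$, I obtain $M(L(W_{1,n})) = 2n - (n-1) = n+1$, and then $Z(L(W_{1,n})) \ge M(L(W_{1,n})) = n+1$ by the inequality $M \le Z$ recorded in the Introduction. I regard this as the crux of the argument, since the matching upper bound needs an explicit construction rather than a general bound (note that Proposition~\ref{ZMAX} only yields $Z(L(W_{1,n})) \ge \Delta(W_{1,n})-1 = n-1$, which is too weak).

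For the upper bound I would exhibit the set $S = \{\ell_0, \ell_1, \ldots, \ell_{n-1}\} \cup \{u_0\}$, of cardinality $n+1$, and check that it is a zero forcing set. The key observation is that, once every spoke-vertex is black, the only candidates for a white neighbor of a spoke $\ell_i$ are its two rim-vertices $u_i$ and $u_{i+1}$. Arguing by induction on the length of the black rim-arc: if $u_0, \ldots, u_t$ are black and $u_{t+1}$ is white, then $\ell_t$ has $u_{t+1}$ as its unique white neighbor (all spokes are black and $u_t$ is black), so $\ell_t \to u_{t+1}$ extends the arc. Starting from $u_0 \in S$, this sweeps around the cycle until all of $u_0, \ldots, u_{n-1}$ are black, whence every vertex of $L(W_{1,n})$ is black. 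Thus $Z(L(W_{1,n})) \le n+1$, and combining with the lower bound yields $Z(L(W_{1,n})) = n+1$. I expect the only point requiring care is the bookkeeping in this forcing chain—verifying that at each step the forcing spoke genuinely has a unique white neighbor—but this is routine given the clique-plus-cycle structure; alternatively one could phrase the whole construction as an edge zero forcing set of $W_{1,n}$ and pass to $L(W_{1,n})$ via Observation~\ref{zf-edgezf}.
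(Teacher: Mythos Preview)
Your argument is correct and follows the same overall strategy as the paper: the lower bound via Proposition~\ref{wheelAIM} and $M(L(W_{1,n})) \le Z(L(W_{1,n}))$ is exactly the paper's argument, and for the upper bound you exhibit an explicit zero forcing set of size $n+1$. Two small remarks are worth making.

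First, your labeling is swapped relative to the paper's Figure~\ref{lemmawheel}: in the paper the $u_i$'s are the spoke-edges (they induce the clique, as one sees from the proof of Theorem~\ref{propdimlinewheel}, where $d(u_i,u_t)=1$ for all $t\neq i$) and the $\ell_i$'s are the rim-edges. Your description is internally consistent, so the mathematics is unaffected, but you should drop the claim that you are using the labeling of Figure~\ref{lemmawheel}.

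Second, your explicit zero forcing set differs from the paper's. You take all $n$ spoke-vertices together with one rim-vertex, and force around the rim via successive spokes; the paper instead takes $n-1$ of the spoke-vertices together with two adjacent rim-vertices, and forces along the rim-cycle first before finally picking up the missing spoke. Both constructions are valid and of the same size; yours is arguably the cleaner of the two, since at every step the forcing vertex is a spoke with exactly one white (rim) neighbor, making the verification immediate.
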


\begin{proof}
Let $n \ge 3$. Since $W_{1,n}$ contains a Hamiltonian path, $mr(L(W_{1,n}))=n-1$ by Proposition \ref{wheelAIM}. Since $Z(G) \geq M(G)$ and $M(G)= |V(G)|-mr(G)$ for any graph $G$, we get $Z(L(W_{1,n}))\geq n+1$ as $L(W_{1,n})$ has order $2n$. On the other hand, take the labeling given in Figure \ref{lemmawheel}; we see that $S=\{\ell_0, \ell_1\} \cup \{u_i \mid 1 \le i \le n-1\}$ forms a zero forcing set for $L(W_{1,n})$: $\ell_1 \rightarrow \ell_2 \rightarrow \ldots \rightarrow \ell_{n-1} \rightarrow u_0$. Therefore, $Z(L(W_{1,n}))=n+1$ for $n \ge 3$. \hfill
\end{proof}

\begin{figure}[htbp]
\begin{center}
\scalebox{0.37}{\input{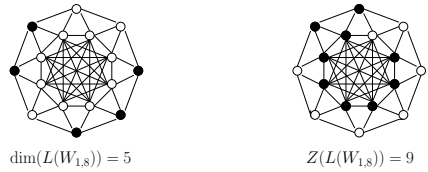}} \caption{\small{Minimum resolving set and minimum zero forcing set (solid vertices in each case) of the line graph of the wheel graph $W_{1,8}$}}\label{wheeldimZ}
\end{center}
\end{figure}

The next result shows that the above result is ``edge-critical" in some sense.

\begin{prop}
Let $G=L(W_{1,n})$ for $n \ge 3$. Then $Z(G-e)=n$ for any edge $e \in E(L(W_{1,n}))$.
\end{prop}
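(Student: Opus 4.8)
We must show that deleting any edge $e$ from $G = L(W_{1,n})$ drops the zero forcing number by exactly one, from $n+1$ down to $n$.

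The plan is to establish the two inequalities $Z(G-e) \ge n$ and $Z(G-e) \le n$ separately. The lower bound is immediate from the general edge-removal stability result already in the excerpt: by part (b) of Theorem \ref{Zedgeremoval}, $Z(G-e) \ge Z(G) - 1 = (n+1) - 1 = n$, using Theorem \ref{zerolinewheel}. So the entire burden falls on proving the matching upper bound $Z(G-e) \le n$ for \emph{every} edge $e$, which amounts to exhibiting, for each edge $e$, a zero forcing set of $G-e$ of size $n$.

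The first step I would take is to classify the edges of $L(W_{1,n})$ up to the symmetry of the graph, so that the casework is finite and manageable. Recall (from the labeling in Figure \ref{lemmawheel}) that $L(W_{1,n})$ has two kinds of vertices: the \emph{hub} vertices $\ell_0,\dots,\ell_{n-1}$ coming from the spokes of $W_{1,n}$, which form a clique $K_n$, and the \emph{rim} vertices $u_0,\dots,u_{n-1}$ coming from the cycle edges. The edges of $L(W_{1,n})$ therefore fall into three orbit-types under the natural cyclic (dihedral) symmetry: (i) an edge $\ell_i\ell_j$ joining two hub vertices; (ii) an edge $u_iu_{i+1}$ joining two consecutive rim vertices; and (iii) an edge $\ell_i u_j$ joining a hub vertex to an incident rim vertex. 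By the cyclic symmetry I may fix a representative of each type, so the task reduces to producing a size-$n$ forcing set for $G-e$ in three representative cases.

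For the second step I would adapt the forcing set $S=\{\ell_0,\ell_1\}\cup\{u_i \mid 1\le i\le n-1\}$ used in the proof of Theorem \ref{zerolinewheel}, which has size $n+1$; the goal is to delete one carefully chosen vertex from a slightly modified version of $S$ once an edge has been removed, since removing the edge $e$ frees up a forcing move that previously required an extra seed vertex. Concretely, the forcing in Theorem \ref{zerolinewheel} proceeded by the chain $\ell_1\to\ell_2\to\cdots\to\ell_{n-1}\to u_0$; the bottleneck is that $\ell_0$ had to be seeded separately and that $u_0$ was the last to be forced. When an edge is deleted, some black vertex that previously had two white neighbors now has only one, so a chain can be initiated with one fewer seed. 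I would track, in each of the three edge-types, exactly which vertex becomes forceable and verify by an explicit forcing chain that $n$ seeds suffice; these verifications are the routine ``one can easily check'' calculations and I would record only the starting set and the initial forcing move in each case.

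The main obstacle I anticipate is \emph{uniformity across edge types}, not any single hard computation. Deleting a hub--hub edge $\ell_i\ell_j$ disturbs the large clique structure on $\{\ell_0,\dots,\ell_{n-1}\}$, and since $Z(K_n)=n-1$ is tight (Proposition \ref{ObsZ}(b)), the clique is exactly the part that resists forcing; I must confirm that removing a single clique edge genuinely lowers the requirement rather than merely shuffling it. Deleting a rim--hub edge $\ell_i u_j$ and deleting a rim--rim edge $u_iu_{i+1}$ each break a different local degree-two configuration, so the freed forcing move occurs at a different location in each case, and I will need a separate explicit chain for each. The real care is in checking that a size-$n$ set remains a \emph{valid} forcing set and not merely a set from which the chain \emph{almost} completes — in particular that the single deleted edge does not strand a vertex that the original chain reached only through $e$. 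Once the three representative chains are written down and the lower bound $Z(G-e)\ge n$ is invoked, the equality $Z(G-e)=n$ follows.
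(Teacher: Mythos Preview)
Your plan coincides with the paper's proof: lower bound via Theorem~\ref{Zedgeremoval}(b) and Theorem~\ref{zerolinewheel}, upper bound by exhibiting explicit size-$n$ forcing sets case-by-case on the edge type. Two small corrections are in order. First, in the paper's labeling (Figure~\ref{lemmawheel}) it is the $u_i$ (coming from the spokes $vw_i$) that span the clique $K_n$, not the $\ell_i$; the $\ell_i$ are the rim edges and form only a cycle --- this is consistent with the forcing chain $\ell_1\to\ell_2\to\cdots\to\ell_{n-1}\to u_0$ you correctly quote, but inconsistent with your description of the $\ell_i$ as ``hub vertices\ldots which form a clique $K_n$''. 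Second, and more substantively, the clique edges $u_iu_j$ do \emph{not} lie in a single orbit under the dihedral symmetry (the orbit depends on $|i-j|\bmod n$), so ``fix a representative'' does not reduce to a single case there; the paper accordingly splits into the consecutive case $e=u_iu_{i+1}$ and the non-consecutive case $e=u_iu_j$ with $|i-j|\not\equiv 1\pmod n$, giving four cases in total, each handled by an explicit forcing set and chain.
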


\begin{proof}
Let $H=L(W_{1,n}) -e$, where $e$ is an edge of $L(W_{1,n})$. We will show that there exists a zero forcing set $S$ for $H$ with $|S|=n$, and thus $Z(H) \le n$. If $e=\ell_iu_i$, say $i=0$, then $S=\{\ell_0, \ell_{n-1}\} \cup \{u_i \mid 1 \le i \le n-2\}$ forms a zero forcing set: $\ell_0 \rightarrow \ell_1 \rightarrow \ell_2 \rightarrow \ldots \rightarrow \ell_{n-2} \rightarrow u_{n-1} \rightarrow u_0$. If $e=\ell_i\ell_{i+1}$, say $i=0$, then $S=\{\ell_1\} \cup \{u_i \mid 1 \le i \le n-1\}$ forms a zero forcing set: (i) $\ell_1 \rightarrow \ell_2 \rightarrow \ldots \rightarrow \ell_{n-1}$; (ii) $u_2 \rightarrow u_0 \rightarrow \ell_0$. If $e=u_iu_{i+1}$, say $i=0$, then $S=\{\ell_0, \ell_1\} \cup \{u_i \mid 1 \le i \le n-2\}$ forms a zero forcing set: (i) $u_1 \rightarrow u_{n-1}$; (ii) $\ell_1 \rightarrow \ell_2 \rightarrow \ldots \rightarrow \ell_{n-1} \rightarrow u_0$. If $e=u_iu_{j}$ with $|i-j| \neq 1$ (mod $n$), say $i=0$, then $S=\{\ell_0, \ell_{n-1}\} \cup (\{u_i \mid 0 \le i \le n-1\} - \{u_1, u_j\})$ forms a zero forcing set: (i) $u_0 \rightarrow u_1$; (ii) $\ell_0 \rightarrow \ell_1$; (iii) $u_1 \rightarrow u_j$; (iv) $\ell_1 \rightarrow \ell_2 \rightarrow \ldots \rightarrow \ell_{n-2}$. So, in each case, we have $Z(H) \le n$. On the other hand, $Z(H) \ge n$ by (b) of Theorem \ref{Zedgeremoval} and Theorem \ref{zerolinewheel}. Thus, $Z(H)=n$. \hfill
\end{proof}


\subsection{Bouquet of Circles}

\begin{theorem} \cite{iteration}\label{bouquetZ}
Let $B_n=(k_1+1, k_2+1, \ldots, k_n+1)$ be a bouquet of $n \ge 2$ circles with
a cut-vertex. Then $Z(B_n)=n+1$.
\end{theorem}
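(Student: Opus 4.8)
The plan is to prove the equality $Z(B_n)=n+1$ by establishing the two inequalities separately, exploiting the special degree structure of $B_n$: the cut-vertex $v$ has degree $2n$, while every other vertex lies on a single cycle and therefore has degree exactly $2$. Throughout I write $A_i = V(C^i)\setminus\{v\}$ for the $i$-th arc, a path on $k_i \ge 2$ vertices whose two endpoints $w_{i,1}, w_{i,k_i}$ are the only arc-vertices adjacent to $v$.

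For the upper bound $Z(B_n)\le n+1$, I would exhibit an explicit zero forcing set of size $n+1$, namely $S=\{v\}\cup\{w_{i,1}\mid 1\le i\le n\}$. Since $v$ is black, each black endpoint $w_{i,1}$ has $w_{i,2}$ as its unique white neighbor, so $w_{i,1}\rightarrow w_{i,2}$; the forcing then propagates along the arc, $w_{i,2}\rightarrow w_{i,3}\rightarrow\cdots\rightarrow w_{i,k_i}$, because at each step the predecessor is black and the successor is the only white neighbor. Carrying this out in parallel over all $n$ cycles blackens every vertex, so $S$ is a zero forcing set and $Z(B_n)\le n+1$.

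For the lower bound $Z(B_n)\ge n+1$, let $S$ be any zero forcing set. The first key step is to show $S\cap A_i\neq\emptyset$ for every $i$: if $A_i$ were initially all white, then the first vertex of $A_i$ to be forced would have to be forced by $v$ (the only neighbor of $A_i$ lying outside $A_i$), but at that moment both $w_{i,1}$ and $w_{i,k_i}$ (distinct, as $k_i\ge 2$) are white, so $v$ has at least two white neighbors and cannot force; hence $A_i$ would never be blackened, a contradiction. This yields $\sum_{i=1}^{n}|S\cap A_i|\ge n$. The second step rules out $|S|=n$: in that case one must have $v\notin S$ and $|S\cap A_i|=1$ for every $i$, but then each black vertex $s_i$ has both of its two neighbors white (its arc-neighbors, and possibly $v$, are all white), so no vertex satisfies the color-change rule and the process is stalled from the outset, contradicting that $S$ forces $B_n$. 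Therefore $|S|\ge n+1$.

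Combining the two bounds gives $Z(B_n)=n+1$. The only delicate point, and the step I expect to require the most care, is the lower bound, specifically making airtight the two uses of the degree-$2$ structure: that each arc must contain an initial black vertex (which rests on $v$ having two \emph{distinct} white arc-endpoints), and that a configuration with exactly one black vertex per arc and $v$ white is genuinely stalled. Both hinge on the hypothesis $k_i\ge 2$, which guarantees $w_{i,1}\neq w_{i,k_i}$ and that every non-cut vertex has degree exactly two.
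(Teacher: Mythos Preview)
The paper does not prove this theorem; it is quoted from \cite{iteration} without argument, so there is no in-paper proof to compare against. Your argument is correct and complete: the explicit set $\{v\}\cup\{w_{i,1}\}$ gives the upper bound, and for the lower bound you correctly show that every arc must meet $S$ (else $v$, the only external neighbor of the arc, would have two white arc-endpoints and could never force into it) and that exactly one black vertex per arc with $v$ white leaves every black vertex with two white neighbors, stalling the process. Both steps use $k_i\ge 2$ exactly where needed, and the degree-$2$ structure of the non-cut vertices is what makes the stalling argument work.
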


\begin{theorem} \label{ZeroLBouquet}
Let $B_n=(k_1+1, k_2+1, \ldots, k_n+1)$ be a bouquet of $n \ge 2$ circles with
a cut-vertex. Then $Z(L(B_n))=2n-1$.
\end{theorem}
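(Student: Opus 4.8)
The plan is to establish $Z(L(B_n)) \geq 2n-1$ and $Z(L(B_n)) \leq 2n-1$ separately, and the whole argument hinges on the clique that the cut-vertex $v$ induces in $L(B_n)$. Since $v$ is incident with exactly $2n$ edges of $B_n$ (two from each circle $C^i$), these $2n$ edges are pairwise adjacent, so the corresponding $2n$ vertices of $L(B_n)$ induce a copy of $K_{2n}$. For each $i$ I would write $a_i,b_i$ for the two vertices of $\tilde C^i$ corresponding to the edges of $C^i$ at $v$; then $\{a_i,b_i : 1\le i\le n\}$ is precisely this clique. The remaining $k_i-1$ vertices of $\tilde C^i$ all have degree $2$ in $L(B_n)$ (each interior edge of $C^i$ meets only two other edges, both lying in $C^i$), and they form an interior path inside the cycle $\tilde C^i$ running from the neighbor of $a_i$ to the neighbor of $b_i$. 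The lower bound is then immediate from Proposition \ref{ZMAX}: since $\Delta(B_n)=\deg_{B_n}(v)=2n$, we get $Z(L(B_n))\ge \Delta(B_n)-1=2n-1$; equivalently one may apply Lemma \ref{Zcompletesub} to the $K_{2n}$ just exhibited.

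For the upper bound I would exhibit an explicit zero forcing set of size $2n-1$, namely $S=\{a_i : 1\le i\le n\}\cup\{p_i : 1\le i\le n-1\}$, where $p_i$ denotes the interior neighbor of $a_i$ inside $\tilde C^i$. The forcing proceeds in three phases. First, for each $i\le n-1$ the black pair $a_i,p_i$ initiates a chain that runs along the degree-two interior path of $\tilde C^i$ and terminates by forcing $b_i$; hence every cycle except $\tilde C^n$ becomes entirely black, and the only white clique vertex left is $b_n$. Second, any $a_i$ with $i\le n-1$ now has $b_n$ as its \emph{unique} white neighbor, since its interior neighbor $p_i$ is already black and every other clique vertex is black; it therefore forces $b_n$, completing the clique $K_{2n}$. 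Third, $a_n$ and $b_n$ each now have all clique-neighbors black and so force their respective interior neighbors, and the two resulting chains sweep around $\tilde C^n$ until they meet, blackening the last cycle. This gives $Z(L(B_n))\le 2n-1$, and with the lower bound we conclude equality.

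The step I expect to be the genuine obstacle, and the reason a naive set fails, is the bootstrapping in the first two phases. One cannot simply take $2n-1$ vertices of the clique: because every clique vertex \emph{also} has an interior (degree-two) neighbor, leaving a single clique vertex white forces each black clique vertex to have at least two white neighbors, so no forcing can ever begin. The essential idea is thus to spend the "extra" budget on the low-degree interior vertices $p_1,\dots,p_{n-1}$ rather than on further clique vertices: the interior chains can complete $n-1$ of the cycles on their own, which simultaneously fills those cycles and cuts each clique vertex down to the single white neighbor $b_n$, at which point the high-degree clique cascade can finally fire. Checking that each interior chain does terminate exactly at $b_i$ — including the degenerate case $k_i=2$, in which $p_i$ is adjacent to both $a_i$ and $b_i$ and forces $b_i$ in one step — is then a routine verification.
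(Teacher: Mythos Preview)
Your proof is correct and matches the paper's approach exactly. The paper's lower bound is likewise $\Delta(B_n)-1=2n-1$ via Proposition~\ref{ZMAX}, and its explicit zero forcing set $(\cup_{i=1}^{n-1}\{u_{i,1}, u_{i,2}\}) \cup \{u_{n,1}\}$ is precisely your set $\{a_i:1\le i\le n\}\cup\{p_i:1\le i\le n-1\}$ in different notation ($u_{i,1}=a_i$, $u_{i,2}=p_i$); you simply spell out the three forcing phases that the paper leaves to the reader.
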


\begin{proof}
Since $\Delta(B_n)=2n$, $Z(L(B_n)) \ge 2n-1$ by Proposition \ref{ZMAX}. On the other hand, take the labeling given in Figure \ref{bouquetlabeling}; since $(\cup_{i=1}^{n-1}\{u_{i,1}, u_{i, 2}\}) \cup \{u_{n,1}\}$ forms a zero forcing set for $L(B_n)$, $Z(L(B_n)) \le 2n-1$. Thus, $Z(L(B_n))=2n-1$.\hfill
\end{proof}

\begin{figure}[htbp]
\begin{center}
\scalebox{0.4}{\input{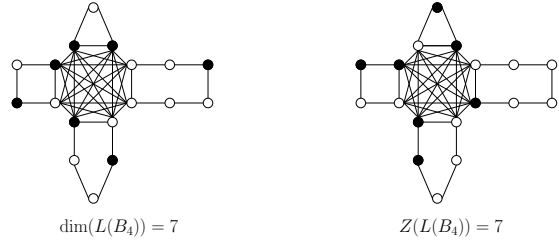}} \caption{\small{Minimum resolving set and minimum zero forcing set (solid vertices in each case) of the line graph of the bouquet of four circles $B_4=(3,4,5,6)$}}\label{bouquetdimZ}
\end{center}
\end{figure}


\section{Comparison and Open Problems}

In this section, we compare the metric dimension with the zero forcing number of a line graph by demonstrating a couple of inequalities between the two parameters. We also mention some open problems. First, recall some results obtained in \cite{dimZ}.

\begin{theorem} \cite{dimZ}
\begin{itemize}
\item[(a)] For any tree $T$, $\dim(T) \leq Z(T)$,
\item[(b)] For any tree $T$ and an edge $e \in E(\overline{T})$, $\dim(T+e) \le Z(T+e)+1$.
\end{itemize}
\end{theorem}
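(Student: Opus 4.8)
The plan is to treat the two assertions separately, using the tree formulas $\dim(T)=\sigma(T)-ex(T)$ (Theorem~\ref{tree}) and $Z(T)=P(T)$ (Theorem~\ref{pathcover}(b)) for the first, and the edge-deletion bound for $Z$ (Theorem~\ref{Zedgeremoval}) combined with the first assertion for the second.

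\textbf{Part (a).} If $T$ is a path then $\dim(T)=Z(T)=1$, so I assume $T$ is not a path; by the two formulas it then suffices to prove the purely combinatorial inequality $\sigma(T)-ex(T)\le P(T)$. First I would fix a minimum path cover $\mathcal{P}$ of $T$ by vertex-disjoint induced paths, so $|\mathcal{P}|=P(T)$. Since a degree-$1$ vertex cannot be interior to an induced path, every end-vertex of $T$ is an endpoint of exactly one member of $\mathcal{P}$; counting $(\text{end-vertex},\text{path-endpoint})$ incidences in two ways gives $2a+b=\sigma(T)$, where $a$ is the number of paths in $\mathcal{P}$ both of whose endpoints are end-vertices of $T$ and $b$ is the number having exactly one such endpoint. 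Discarding the paths with no end-vertex endpoint yields $P(T)\ge a+b=\sigma(T)-a$.

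The key step is the bound $a\le ex(T)$. For each path $Q$ counted by $a$, I would pick one of its two end-vertex endpoints, say $x_Q$, and walk from $x_Q$ along degree-$2$ vertices to the first major vertex $m(Q)$; since $T$ is not a path this vertex exists and lies on $Q$, and $x_Q$ is a terminal vertex of $m(Q)$ (every other major vertex is reached only through $m(Q)$, hence is strictly farther), so $m(Q)$ is an exterior major vertex. If $m(Q)=m(Q')$ for distinct $Q,Q'$ counted by $a$, then $m(Q)$ would lie on both, contradicting that $\mathcal{P}$ is a partition; thus $Q\mapsto m(Q)$ injects these paths into the exterior major vertices and $a\le ex(T)$. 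Combining, $P(T)\ge\sigma(T)-a\ge\sigma(T)-ex(T)$, i.e.\ $\dim(T)\le Z(T)$.

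\textbf{Part (b).} Write $H=T+e$, a connected unicyclic graph with unique cycle $C$. The plan is first to record that $\dim(T)\le Z(H)+1$: by Theorem~\ref{Zedgeremoval}(b) applied to $H$ and $e$ we have $Z(T)=Z(H-e)\le Z(H)+1$, and part (a) gives $\dim(T)\le Z(T)$. It then remains only to compare $\dim(H)$ with $\dim(T)$. Whenever adding $e$ does not raise the metric dimension, i.e.\ $\dim(H)\le\dim(T)$, one immediately gets $\dim(H)\le\dim(T)\le Z(T)\le Z(H)+1$, so the only outstanding case is when adding $e$ strictly increases the metric dimension.

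The main obstacle is exactly this last case, and I would resolve it through two claims: (A) $\dim(H)\le\dim(T)+1$, proved constructively by showing that a metric basis of $T$ together with a single well-chosen vertex on $C$ resolves $H$; and (B) if $\dim(H)=\dim(T)+1$, then $Z(H)\ge Z(T)$. Granting these, the outstanding case gives $\dim(H)=\dim(T)+1\le Z(T)+1\le Z(H)+1$, completing the proof. Claim (B) is the crux: I must rule out the simultaneous occurrence of the two ``bad'' events, namely $e$ raising $\dim$ \emph{and} lowering $Z$. The increase in $\dim$ originates from the new symmetry created by $C$ (as in closing a path into $C_n$), whereas a decrease in $Z$ would require $e$ to merge two forcing chains; I expect to show, by examining how a minimum zero forcing set meets $C$, that the symmetry forcing $\dim(H)=\dim(T)+1$ simultaneously blocks any such reduction, so that $Z(H)\ge Z(T)$. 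Establishing Claim (B) across the possible positions of $e$ relative to the exterior major vertices of $T$ is where I anticipate the real work.
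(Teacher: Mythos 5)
A preliminary remark: this theorem is quoted from \cite{dimZ}; the present paper states it without proof, so there is no in-paper argument to compare against and your proposal must be judged on its own merits.

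Your part (a) is correct and complete. Reducing to trees that are not paths, invoking $\dim(T)=\sigma(T)-ex(T)$ (Theorem \ref{tree}) and $Z(T)=P(T)$ (Theorem \ref{pathcover}(b)), and establishing $\sigma(T)-ex(T)\le P(T)$ via the incidence count $2a+b=\sigma(T)$ together with the injection $Q\mapsto m(Q)$ into the set of exterior major vertices is a valid argument; the injectivity from vertex-disjointness and the verification that $m(Q)$ exists, lies on $Q$, and has $x_Q$ as a terminal vertex are all sound. The only loose end is the degenerate case of a one-vertex path of the cover whose vertex is a leaf of $T$: such a path contributes only one incidence and must be counted with $b$, so that $Q\mapsto m(Q)$ is applied only to paths with two \emph{distinct} leaf endpoints, for which $m(Q)$ genuinely lies in the interior of $Q$. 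This is easily repaired.

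Part (b), however, contains a genuine gap. You correctly dispose of the case $\dim(T+e)\le\dim(T)$ using Theorem \ref{Zedgeremoval}(b) and part (a), but the remaining case rests on two claims, (A) $\dim(T+e)\le\dim(T)+1$ and (B) if $\dim(T+e)=\dim(T)+1$ then $Z(T+e)\ge Z(T)$, and you prove neither. Claim (A) is at least recoverable from the literature (it is essentially the unicyclic bound of Poisson and Zhang \cite{PoZh}), though your one-line sketch --- a basis of $T$ plus one well-chosen cycle vertex resolves $T+e$ --- is itself nontrivial, since all codes must be recomputed with respect to distances in $T+e$ rather than $T$. Claim (B) is the entire difficulty of the hard case, and for it you offer only an expectation that the symmetry raising the dimension also blocks a drop in the zero forcing number; no mechanism is given, and the two parameters are governed by very different structures (distance resolution versus forcing chains), so this is not a detail but the crux. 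Without (B), your chain $\dim(T+e)=\dim(T)+1\le Z(T)+1\le Z(T+e)+1$ fails at its final step. Note also that (B) is not the only possible repair: in the purported bad case ($\dim$ up by one and $Z$ down by one) the theorem only requires the strict inequality $\dim(T)<Z(T)$, so one could instead try to prove strictness in part (a) under those circumstances. Either way, the substantive analysis of unicyclic graphs that the cited source \cite{dimZ} actually carries out is absent from the proposal, so part (b) stands unproven.
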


As an immediate consequence of Theorem \ref{lineT} and Theorem \ref{ZlineT}, we have the following

\begin{cor}\label{comp0}
For any tree $T$, $\dim(L(T)) \le Z(L(T))$.
\end{cor}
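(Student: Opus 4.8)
The statement to prove is Corollary~\ref{comp0}: for any tree $T$, $\dim(L(T)) \le Z(L(T))$.

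The plan is to combine two results already established in the excerpt, each of which gives an exact formula for one of the two quantities in terms of graph invariants of $T$. By Theorem~\ref{lineT}, for a tree $T$ that is not a path we have $\dim(L(T)) = \sigma(T) - ex(T)$, where $\sigma(T)$ counts end-vertices and $ex(T)$ counts exterior major vertices. By Theorem~\ref{ZlineT}, for any nontrivial tree we have $Z(L(T)) = \sigma(T) - 1$. So the inequality to verify reduces, in the non-path case, to the purely combinatorial claim
\begin{equation}\nonumber
\sigma(T) - ex(T) \le \sigma(T) - 1,
\end{equation}
which is equivalent to $ex(T) \ge 1$.

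First I would dispose of the degenerate case. If $T = P_n$ is a path, then $L(T) = P_{n-1}$, so $\dim(L(T)) = 1$ and $Z(L(T)) = 1$ (using the fact that $\dim(P_m) = Z(P_m) = 1$, part (a) of Theorem~\ref{dimthm} and part (a) of Proposition~\ref{ObsZ}); the inequality holds with equality. Next, for a tree $T$ that is not a path, I would invoke Theorem~\ref{lineT} and Theorem~\ref{ZlineT} to rewrite both sides, reducing the corollary to showing $ex(T) \ge 1$. The key observation is that a tree which is not a path must contain at least one vertex of degree at least three, i.e. at least one major vertex; and since $T$ is finite, at least one major vertex is closest (among major vertices) to some end-vertex, making it an exterior major vertex with positive terminal degree. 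Hence $ex(T) \ge 1$, giving $\dim(L(T)) = \sigma(T) - ex(T) \le \sigma(T) - 1 = Z(L(T))$.

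The main obstacle, if any, is simply making the existence of an exterior major vertex precise: one must argue that in a non-path tree, not merely that a major vertex exists, but that \emph{some} major vertex has positive terminal degree. This follows because every end-vertex has a nearest major vertex (the first vertex of degree $\ge 3$ encountered along the unique path from the end-vertex inward, which exists since $T$ is not a path), and that major vertex then has the end-vertex as a terminal vertex. Since this is essentially immediate from the definitions of terminal vertex and exterior major vertex given just before Theorem~\ref{tree}, the corollary follows directly, which is why it is stated as an immediate consequence rather than proved at length.
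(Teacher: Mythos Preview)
Your proposal is correct and follows exactly the approach the paper indicates: the paper states the corollary as an immediate consequence of Theorem~\ref{lineT} and Theorem~\ref{ZlineT}, and your argument simply spells out that combination together with the trivial observation $ex(T)\ge 1$ for non-path trees and the path case handled separately.
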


If the order of a connected graph $G$ is $4$ or less, one can check all cases to see that $\dim(L(G)) \le Z(L(G))$. Now, we show that $\dim(L(G)) \le Z(L(G))$ is satisfied for another class of graphs. 

\begin{prop} \label{comp2}
Suppose that a connected graph $G$ of order $n\geq 5$ contains a Hamiltonian path and satisfies $|E(G)| \ge 2(n-2)$. Then $\dim(L(G)) \le Z(L(G))$.
\end{prop}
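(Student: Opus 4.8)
The strategy is to sandwich both quantities against the number $n-2$: bound $\dim(L(G))$ from above by $n-2$, bound $Z(L(G))$ from below by $|E(G)|-(n-2)$, and then let the hypothesis $|E(G)| \ge 2(n-2)$ bridge the two bounds. Indeed, the whole proof rests on the chain
$$\dim(L(G)) \le n-2 \le |E(G)|-(n-2) = M(L(G)) \le Z(L(G)),$$
so that once each link is justified the conclusion is immediate.

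\textbf{Steps.} First I would invoke Theorem \ref{linebounds}: since $G$ is connected of order $n \ge 5$, we have $\dim(L(G)) \le n-2$, which establishes the leftmost link. Next, because $G$ contains a Hamiltonian path, Proposition \ref{wheelAIM} gives $mr(L(G)) = n-2$; using the identity $mr(H)+M(H)=|V(H)|$ for $H=L(G)$ together with $|V(L(G))|=|E(G)|$, this yields $M(L(G)) = |E(G)|-(n-2)$. Finally, the general inequality $M(H) \le Z(H)$ (noted after the definition of the zero forcing number, from \cite{AIM}) supplies the rightmost link $M(L(G)) \le Z(L(G))$. The only remaining inequality, $n-2 \le |E(G)|-(n-2)$, is exactly equivalent to the standing hypothesis $|E(G)| \ge 2(n-2)$, which closes the chain.

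\textbf{Obstacle.} There is essentially no serious obstacle here; the argument is a direct transcription of the bookkeeping already carried out in Proposition \ref{open_zero_line}, with the upper bound on $\dim(L(G))$ from Theorem \ref{linebounds} playing the role that the connectedness bound $Z(G) \le n-1$ played there. The one point to verify carefully is that the hypotheses line up with the minimum-rank ingredient: Proposition \ref{wheelAIM} requires only a Hamiltonian path (which we assume), and Theorem \ref{linebounds} requires $n \ge 5$ (also assumed), so both tools apply without extra work. Thus the proposition follows purely by combining these known bounds with the arithmetic condition on $|E(G)|$.
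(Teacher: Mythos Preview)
Your proposal is correct and follows exactly the paper's own argument: bound $\dim(L(G))$ above by $n-2$ via Theorem~\ref{linebounds}, bound $Z(L(G))$ below by $|E(G)|-(n-2)$ via Proposition~\ref{wheelAIM} together with $M(L(G))\le Z(L(G))$, and let the hypothesis $|E(G)|\ge 2(n-2)$ close the gap. The only cosmetic difference is that you record $M(L(G))=|E(G)|-(n-2)$ as an equality (which is indeed what Proposition~\ref{wheelAIM} yields) whereas the paper writes it as an inequality.
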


\begin{proof}
Let $G$ satisfying the hypotheses be given. Notice that $\dim(L(G)) \le n-2$ by Theorem \ref{linebounds}. By Proposition \ref{wheelAIM}, $Z(L(G)) \ge M(L(G)) \ge |V(L(G))|-(n-2)=|E(G)|-(n-2)$. The assertion $\dim(L(G)) \le Z(L(G))$ is satisfied by requiring that $|E(G)|-(n-2)\ge n-2$, which is equivalent to the hypothesis $|E(G)| \ge 2(n-2)$.~\hfill
\end{proof}

Proposition \ref{comp2}, together with the fact $2 \cdot |E(G)|=\sum_{v\in V(G)}\deg(v)=|V(G)| \cdot (\mbox{average degree of }G)$, implies the following

\begin{cor}
Let a connected graph $G$ have order $n \ge 5$ and contain a Hamiltonian path. If $5 \le n \le 8$ and the average degree of $G$ is at least $3$, then $\dim(L(G)) \le Z(L(G))$; if $n \ge 9$ and the average degree of $G$ is at least $4$, then $\dim(L(G)) \le Z(L(G))$.
\end{cor}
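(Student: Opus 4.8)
The plan is to derive this corollary directly from Proposition \ref{comp2} by translating its edge-count hypothesis $|E(G)| \ge 2(n-2)$ into a threshold on the average degree via the handshake identity $2|E(G)| = \sum_{v \in V(G)} \deg(v) = n \cdot \bar d$, where $\bar d$ denotes the average degree of $G$ and $n = |V(G)|$. Since $G$ is already assumed connected of order $n \ge 5$ and to contain a Hamiltonian path, the only thing left to check in order to invoke Proposition \ref{comp2} is the edge bound, and each of the two stated average-degree hypotheses is designed to supply exactly that.

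First I would rewrite the sufficient condition of Proposition \ref{comp2}. Because $|E(G)| = \tfrac{1}{2} n \bar d$, the inequality $|E(G)| \ge 2(n-2)$ is equivalent to $\tfrac{1}{2} n \bar d \ge 2(n-2)$, that is,
$$\bar d \ge \frac{4(n-2)}{n} = 4 - \frac{8}{n}.$$
Thus, whenever the average degree meets this $n$-dependent threshold, Proposition \ref{comp2} immediately yields $\dim(L(G)) \le Z(L(G))$, and it remains only to confirm that each hypothesis clears the threshold on its respective range of $n$.

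Next I would verify the two cases. For $5 \le n \le 8$, the quantity $4 - \tfrac{8}{n}$ is increasing in $n$ and attains its maximum value $3$ at $n = 8$; hence $4 - \tfrac{8}{n} \le 3$ throughout this range, so the hypothesis $\bar d \ge 3$ forces $\bar d \ge 4 - \tfrac{8}{n}$ and therefore $|E(G)| \ge 2(n-2)$. For $n \ge 9$ we have $\tfrac{8}{n} > 0$, so $4 - \tfrac{8}{n} < 4$, whence the hypothesis $\bar d \ge 4$ suffices; indeed $\bar d \ge 4$ gives $|E(G)| = \tfrac{1}{2} n \bar d \ge 2n \ge 2(n-2)$ outright. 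In either case the edge bound of Proposition \ref{comp2} is satisfied, and the conclusion $\dim(L(G)) \le Z(L(G))$ follows.

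There is essentially no genuine obstacle here beyond the elementary threshold computation; the only point deserving a moment's care is confirming that $n = 8$ is precisely the largest order for which average degree $3$ still guarantees $|E(G)| \ge 2(n-2)$ (equivalently, that $3n \ge 4n - 8$ holds exactly when $n \le 8$), which is why the statement splits cleanly at $n \le 8$ versus $n \ge 9$.
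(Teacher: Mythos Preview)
Your proof is correct and follows exactly the approach the paper indicates: the corollary is stated immediately after Proposition~\ref{comp2} with the remark that it follows from that proposition together with the handshake identity $2|E(G)|=\sum_{v}\deg(v)=n\cdot(\text{average degree})$, and you have simply supplied the (routine) threshold computation that the paper omits.
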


Next, we give a table on metric dimension and zero forcing number of some line graphs. We denote by $T$ a tree, $K_{s,t}$ the complete bi-partite graph of order $s+t$, $W_{1,n}$ the wheel graph of order $n+1$, $B_n$ the bouquet of $n$ circles. Further, $[n]$ next to a formula indicates that the formula can be found in reference $[n]$.

\begin{center}
\begin{tabular}{|r| |r@{}l p{2in}|}
\hline
\multicolumn{1}{|c||}{$L(G)$}&
\multicolumn{1}{c|}{Metric Dimension}&
\multicolumn{1}{c|}{Zero Forcing Number}
\\  \hline
\multicolumn{1}{|c||}{$L(T)$, $T \neq P_n$}&
\multicolumn{1}{c|}{$\sigma(T)-ex(T)$ \cite{line}}&
\multicolumn{1}{c|}{$\sigma(T)-1$ \cite{AIM}}
\\ \hline
\multicolumn{1}{|c||}{$L(K_n)$, $n \ge 6$}&
\multicolumn{1}{c|}{$\lceil\frac{2n}{3} \rceil$ \cite{base}}&
\multicolumn{1}{c|}{$\frac{1}{2}(n^2-3n+4)$ \cite{AIM}}
\\  \hline
\multicolumn{1}{|c||}{$L(K_{s,t})$, $s,t \ge 2$}&
\multicolumn{1}{c|}{$\left\{
\begin{array}{ll}
\lfloor\frac{2(s+t-1)}{3}\rfloor & \mbox{ if } s \le t \le 2s-1\\
t-1 & \mbox{ if } t \ge 2s
\end{array} \right.$ \cite{cartesian}}&
\multicolumn{1}{c|}{$st-s-t+2$ \cite{AIM}}
\\  \hline
\multicolumn{1}{|c||}{$L(W_{1,n})$, $n \ge 3$}&
\multicolumn{1}{c|}{
$\left\{
\begin{array}{ll}
3 & \mbox{ if } n=3,4\\
4 & \mbox{ if }n=5\\
n-\lceil \frac{n}{3}\rceil & \mbox{ if } n \ge 6
\end{array} \right.$
}&
\multicolumn{1}{c|}{$n+1$}
\\  \hline
\multicolumn{1}{|c||}{$L(B_n)$, $n \ge 2$}&
\multicolumn{1}{c|}{$2n-1$}&
\multicolumn{1}{c|}{$2n-1$}
\\  \hline
\end{tabular}
\end{center}

We conclude this paper with some open problems.\\

\textbf{Problem 1.} We proved in Theorem \ref{ZLtree1} that $Z(T) \le Z(L(T))$ for any nontrivial tree $T$. We also proved in Proposition \ref{open_zero_line} that $Z(G) \le Z(L(G))$ for a graph $G$ such that $|E(G)| \ge 2|V(G)|-4$ and $G$ contains a Hamiltonian path. For a general graph $G$, we proved in Theorem \ref{ZLtree2} that $Z(G) \le 2Z(L(G))$. We conjecture that $Z(G) \le Z(L(G))$ for any $G$.\\

\textbf{Problem 2.} It is stated in Corollary \ref{comp0} that $\dim(L(T)) \le Z(L(T))$ for a tree $T$. We proved in Proposition \ref{comp2} that $\dim(L(G)) \le Z(L(G))$ for a graph $G$ such that $|E(G)| \ge 2|V(G)|-4$ and $G$ contains a Hamiltonian path. We conjecture that $\dim(L(G)) \le Z(L(G))$ for any $G$.\\

\textbf{Problem 3.} A characterization of a tree $T$ such that $\dim(T)=Z(T)$ is given in \cite{dimZ}. What about characterizing $\dim(G)=Z(G)$ for other classes of graphs?\\

\textbf{Problem 4.} We proved in Theorem \ref{DimLBouquet} and Theorem \ref{ZeroLBouquet} that $\dim(L(B_n))=Z(L(B_n))$. What about characterizing $\dim(L(G))=Z(L(G))$ for other classes of graphs?\\

\textit{Acknowledgement.} The authors thank the anonymous referees for some helpful comments and suggestions which improved the paper.


\end{document}